\def\grad{\nabla}
\def\bo{\mathbf{0}}
\def\cA{\mathcal{A}}
\def\cB{\mathcal{B}}
\def\cC{\mathcal{C}}
\def\cD{\mathcal{D}}
\def\cF{\mathcal{F}}
\def\cK{\mathcal{K}}
\def\cL{\mathcal{L}}
\def\cN{\mathcal{N}}
\def\cO{\mathcal{O}}
\def\cP{\mathcal{P}}
\def\cQ{\mathcal{Q}}
\def\cS{\mathcal{S}}
\def\eq{\[}
\def\en{\]}
\def\smskip{\smallskip}
\def\texitem#1{\par\smskip\noindent\hangindent 25pt
               \hbox to 25pt {\hss #1 ~}\ignorespaces}
\def\norm#1{\|#1\|}
\newcommand{\BEAS}{\begin{eqnarray*}}
\newcommand{\EEAS}{\end{eqnarray*}}
\newcommand{\BEA}{\begin{eqnarray}}
\newcommand{\EEA}{\end{eqnarray}}
\newcommand{\BEQ}{\begin{eqnarray}}
\newcommand{\EEQ}{\end{eqnarray}}
\newcommand{\BIT}{\begin{itemize}}
\newcommand{\EIT}{\end{itemize}}
\newcommand{\BNUM}{\begin{enumerate}}
\newcommand{\ENUM}{\end{enumerate}}
\newcommand{\BA}{\begin{array}}
\newcommand{\EA}{\end{array}}
\newcommand{\ones}{\mathbf 1}
\newcommand{\reals}{\mathbb{R}}
\newcommand{\integers}{\mathbb{Z}}
\newcommand{\Rank}{\mathop{\bf rank}}
\newcommand{\argmin}{\mathop{\rm argmin}}
\newcommand{\argmax}{\mathop{\rm argmax}}
\newcommand{\dom}{\mathop{\bf dom}}
\newcommand{\relint}{\mathop{\bf rel int}}
\newif\ifpagenumbering
\newsavebox{\remarkbox}
\savebox{\remarkbox}{\noindent\bf Remark}
\def\fprod#1{\left\langle#1 \right\rangle}
\def\xik{\xi^{(k)}}
\def\fprod#1{\left\langle#1\right\rangle}
\def\alcc{ALCC algorithm}
\def\apg{APG algorithm}
\def\oracle{\mbox{\scshape{Oracle}}}
\def\xik{\xi_k}
\def\xikstar{\xi_k^\ast}
\newtheorem{assumption}[theorem]{Assumption}
\title{An Augmented Lagrangian Method for Conic Convex Programming}
\author{N. S. Aybat\thanks{IE Department, Pennsylvania State University.
    Email: {\tt nsa10@psu.edu}} \and G. Iyengar\thanks{IEOR
    Department, Columbia University.
    Email: {\tt gi10@columbia.edu}}}
\begin{document}
\maketitle
\begin{abstract}
We propose a new first-order augmented Lagrangian algorithm ALCC for solving
 convex conic programs of the form
\eq
\min \big\{ \rho(x) + \gamma(x) : Ax-b\in\cK,~x \in \chi\big\},
\en
where 
$\rho:\reals^n\rightarrow\reals\cup\{+\infty\}$,
$\gamma:\reals^n\rightarrow\reals$ are closed, convex
functions, and $\gamma$ has a Lipschitz continuous gradient,
$A\in\reals^{m\times n}$, $\cK\subset\reals^m$ is a
closed convex cone, and $\chi\subset\dom(\rho)$ is a ``simple'' convex
compact set 
such that optimization problems of the form $\min\{\rho(x) +
\norm{x-\bar{x}}_2^2: x \in \chi\}$ can be efficiently solved.
We show that any limit point of the primal ALCC iterates 
is an
optimal solution of the conic convex problem, and the dual ALCC iterates
have a unique limit point that is a Karush-Kuhn-Tucker~(KKT) point of the conic
program. We also show that for
{\em any} $\epsilon>0$, the primal ALCC iterates are $\epsilon$-feasible and
$\epsilon$-optimal after $\cO(\log(\epsilon^{-1}))$ iterations which
require solving $\cO(\epsilon^{-1}\log(\epsilon^{-1}))$ 
problems of
the form $\min_x\{\rho(x)+\norm{x-\bar{x}}_2^2:\ x\in\chi\}$.
\end{abstract}
\section{\normalsize Introduction}
In this paper we propose an inexact augmented Lagrangian algorithm~(ALCC)
for solving conic convex problems of the form
\begin{equation}
\label{eq:conic_problem}
(P):  \min \big\{\rho(x) + \gamma(x) :  Ax-b\in\cK,~  x \in \chi\big\},
\end{equation}
where 
$\rho:\reals^n\rightarrow\reals\cup\{+\infty\}$,
$\gamma:\reals^n\rightarrow\reals$ are proper, closed, convex
functions, and $\gamma$ has a  Lipschitz continuous gradient $\grad
\gamma$ with the Lipschitz constant $L_{\gamma}$, $A\in\reals^{m\times
  n}$, $\cK\subset\reals^m$ is a
nonempty, closed, convex cone, and $\chi\subset\dom(\rho)$ is a ``simple''
compact set in the sense that the optimization problems of the form
\begin{equation}
\label{eq:nonsmooth-operation}
\min_{ x \in \chi} \left\{\rho(x)+\norm{x-\bar{x}}_2^2\right\}
\end{equation}
can be efficiently solved for any  $\bar{x}\in\reals^n$. Note
that we do not require $A\in\reals^{m\times n}$  to satisfy any
additional regularity properties.
For notational convenience, we set
\eq
p(x) := \rho(x) + \gamma(x).
\en
In some problems, the compact set $\chi$ is explicitly present. For example, in a zero-sum game the decision $x$ represents a mixed strategy and the set $\chi$ is a simplex. In others, $\chi$ may not be explicitly present, but one can formulate an equivalent problem where the vector of decision variables can be constrained to lie in a bounded feasible set without any loss of generality. For example, if $\gamma$ is strongly convex, or if $\rho$ is a norm and $\gamma(\cdot)\geq 0$, then the decision vector $x$ can be restricted to lie in a appropriately defined norm ball centered at any feasible solution.

We assume that the following constraint qualification holds for $(P)$.
\begin{assumption}
\label{asp:KKT}
The problem $(P)$ in \eqref{eq:conic_problem} has a Karush-Kuhn-Tucker~(KKT) point,
i.e., there  exists $y^*\in\cK^*$ such that
$g_0(y^*):=\inf\{p(x)-\fprod{y^*,~Ax-b}:\ x\in\chi\} =p^* > -\infty$,
where $p^\ast$ denotes the optimal value of $(P)$ and $\cK^\ast$
denotes the dual cone corresponding to $\cK$, i.e., $\cK^\ast:=\{y\in\reals^m:\ \fprod{y,x}\geq 0\ \forall x\in\cK\}$.
\end{assumption}\\
Assumption~\ref{asp:KKT} clearly holds whenever there exists
$\tilde{x}\in\relint(\chi)$ such that $A\tilde{x}-b\in
\mathbf{int}(\cK)$~\cite{Boyd04_1B}.

\subsection{Special cases}
Many important optimization problems are special cases of
\eqref{eq:conic_problem}. Below, we briefly discuss some
examples.

\textbf{\emph{Min-max games with convex loss function:}}
  This problem is a generalization of the matrix game discussed in
  \cite{Nesterov05}. The decision maker can choose from $n$ possible
  actions. Let $x \in \reals^n_{+}$ denote a mixed strategy over the set of
  actions, i.e., $x\in\chi := \{x: \sum_{j=1}^n x_j = 1, x \geq \bo\}$. Suppose the
  mixed strategy $x$ must
  satisfy constraints of the form $Ax - b \in \cK$. These constraints
  could be modeling average cost constraints. For example, one may
  have  constraints of the form $Ax \leq b$, where $A \in \reals^{m\times n}$  and $A_{ij}$ denotes
  amount of resource $i$ consumed by action $j$. One may also have
  constraints that restrict the total probability weight of some given subsets
  of actions.

  The adversary has $p$ possible actions. The expected loss to decision maker when she
  chooses the mixed strategy $x\in\reals^n$ and the adversary chooses the mixed
  strategy $y\in \reals^p$ is given by
  \eq
  \rho(x) + y^T Cx - \phi(y),
  \en
  where $\rho$ is a convex function, and $\phi$ is a strongly convex
  function. Then the decision maker's optimization problem that minimizes the expected worst case loss is given by
  \begin{equation}
    \label{eq:min-max-strat}
    \min \left\{ \rho(x) + \gamma(x):\ Ax - b \in \cK,\ x \in \chi\right\},
  \end{equation}
  where
  \begin{equation}
    \label{eq:matrix-game}
    \gamma(x) = \max\bigg\{ y^T Cx -\phi(y):\ \sum_{k=1}^py_k = 1,\ y \geq \bo\bigg\}.
  \end{equation}
  From Danskin's theorem, it follows that
  $\grad\gamma(x)=C^Ty(x)$, where $y(x)$ denotes the unique minimizer in
  \eqref{eq:matrix-game} for a given $x$.
  In \cite{Nesterov05}, Nesterov
  showed that $\grad\gamma$ is Lipschitz continuous with Lipschitz
  constant $\sigma_{\max}(C)^2/\tau$, where $\tau$ denotes the convexity
  parameter for the strongly convex function $\phi$. Thus, it follows that
  the minimax optimization problem~\eqref{eq:min-max-strat} is a special
  case of \eqref{eq:conic_problem}.

\textbf{\emph{Problems with semidefinite constraints:}}
Let $\cS^m$ denote the set of $m \times m$ symmetric matrices, and
let $\cS^m_+$ denote the closed convex cone of $m \times m$ symmetric positive
semidefinite matrices.
A convex optimization problem with a linear matrix inequality constraint is of the form
\begin{equation}
  \label{eq:cvx-sdp}
  \min \bigg\{ \rho(x): \sum_{j=1}^n A_j x_j + B \in \cS^m_+\bigg\},
\end{equation}
where $\rho$ is a convex function, $B \in \cS^m$, and $A_j \in \cS^m$
for  $j = 1, \ldots, n$.
Convex problems of the form~\eqref{eq:cvx-sdp} can model many applications in
engineering, statistics
and combinatorial optimization
\cite{Boyd04_1B}. In most of these applications, either the
constraints imply that the decision vector $x$ is bounded, or one can often
establish that the optimal solution lies in a norm-ball. In such cases,
\eqref{eq:cvx-sdp} is a special case of \eqref{eq:conic_problem}.
Consider the $\ell_1$-minimization problem of the form
\begin{equation}
  \label{eq:min-l1-LMI}
  \min\bigg\{\norm{x}_1: \sum_{j=1}^n A_j x_j + B \in \cS^m_+\bigg\}.
\end{equation}
Suppose a feasible solution $x_0$ for this problem is known.
Then~\eqref{eq:min-l1-LMI} is a special case of \eqref{eq:conic_problem}
with $\rho(x)=\norm{x}_1$,
$\gamma(\cdot)= 0$, $\cK=\cS^m_+$ and
$\chi=\{x\in\reals^n:\norm{x}_1\leq\norm{x_0}_1\}$.
The main bottleneck step in solving this problem using the \alcc\  
reduces to the ``shrinkage'' problem of the form
$\min\{\lambda\norm{x}_1+\norm{x-\bar{x}}_2^2:\ \norm{x}_1\leq
\norm{x_0}_1\}$
that can be solved very efficiently for any given $\bar{x}\in\reals^n$ and $\lambda>0$.
\subsection{Notation}
Let $S\subset\reals^m$ be a nonempty, closed, convex
set. Let $d_S:\reals^m\rightarrow\reals_+$ 
denote the function
\begin{equation}
\label{eq:dist_func}
d_S(\bar{x}):=\min_{x \in S} \norm{x-\bar{x}}_2,
\end{equation}
i.e., $d_{S}(\bar{x})$ denotes the $\ell_2$-distance of the vector
$\bar{x}\in\reals^m$ to the set $S$. Let
\begin{equation}
  \label{eq:Kproj}
  \Pi_S(\bar{x}):=\argmin\{\norm{x-\bar{x}}_2:\ x\in S\},
\end{equation}
denote the $\ell_2$-projection of the vector $\bar{x}\in\reals^m$ onto the set
$S$. Since $S\subset\reals^m$ is a nonempty, closed, convex set,
$\Pi_S(\cdot)$ is well defined. Moreover, $d_{S}(\bar{x}) =
\norm{\bar{x}-\Pi_{S}(\bar{x})}_2$.
\subsection{New results}
The main results of this paper are as follows:
\begin{enumerate}[(a)]
\item Every limit point of the sequence of  ALCC primal iterates $\{x_k\}$
  is an optimal solution of \eqref{eq:conic_problem}.
\item The sequence of ALCC dual iterates $\{y_k\}$ converges to a KKT point of
  \eqref{eq:conic_problem}.
\item For \emph{all} $\epsilon>0$, the primal ALCC iterates $x_k$ are
  $\epsilon$-feasible, i.e., $x_k\in\chi$ and $d_\cK(Ax_k-b)\leq\epsilon$,
  and $\epsilon$-optimal, i.e., $|p(x_k)-p^*|\leq\epsilon$ after at most
  $\cO\left(\log\left(\epsilon^{-1}\right)\right)$ ALCC
  iterations that require solving at most
  $\cO(\epsilon^{-1}\log(\epsilon^{-1}))$ problems of the
  form \eqref{eq:nonsmooth-operation}.
\end{enumerate}
\smskip

\noindent Since \eqref{eq:conic_problem} is a conic convex programming
problem, many special cases of \eqref{eq:conic_problem}
can be solved in polynomial time, at least in theory, using interior point
methods. However, in practice, the interior point methods are not able
to solve very large
instances of~\eqref{eq:conic_problem} because the computational
complexity of a matrix
factorization step, which is essential in these methods, becomes prohibitive.
On the other hand, the computational bottleneck in the \alcc\ is the
projection \eqref{eq:nonsmooth-operation}. In many optimization
problems that arise in applications, this projection can be solved
very efficiently as is the case with noisy compressed
sensing and matrix completion problems discussed in \cite{Ser10_1J}, and the convex
optimization problems with semidefinite constraints discussed above.
The convergence results above imply that the \alcc\ can solve very large instances of
\eqref{eq:conic_problem} very efficiently provided the corresponding
projection~\eqref{eq:nonsmooth-operation} can be solved
efficiently. The numerical results reported in~\cite{Aybat11_1J,Ser10_1J} for a
special case of \alcc\ provide evidence that our proposed algorithm
can be scaled to solve very large instances of the conic
problem~\eqref{eq:conic_problem}.

\subsection{Previous work}
\label{sec:previous}
Rockafellar~\cite{Roc73_1J} proposed an inexact augmented Lagrangian
method to solve problems of the form
\begin{equation}
\label{eq:rockafellar_problem}
p^*=\min \big\{p(x) : f(x)\geq 0,~x \in \chi\big\},
\end{equation}
where $\chi\subset\reals^n$ is a closed convex set,
$p:\reals^n\rightarrow\reals\cup\{+\infty\}$ is a convex function and
$f:\reals^n\rightarrow\reals^m$ such that each
component $f_i(x)$ of $f = (f_1, \ldots, f_m)$ is a
concave function for $i=1,\ldots,m$. 
Rockafellar~\cite{Roc73_1J} defined the ``penalty'' Lagrangian
\begin{equation}
\label{eq:Lagrangian_rock}
\tilde{\cL}_\mu(x,y):=p(x)+\frac{\mu}{2}~\Big\|\Big(\frac{y}{\mu}-f(x)\Big)_+\Big\|_2^2
-\frac{\norm{y}_2^2}{2\mu},
\end{equation}
where $(\cdot)_+:=\max\{\cdot,\mathbf{0}\}$ and $\max\{\cdot,\cdot\}$ are
componentwise operators, and $\mu$ is a fixed penalty parameter.
Rockafellar~\cite{Roc73_1J} established that given $y_0 \in \reals^m$,
the primal-dual iterates  sequences
$\{x_k, y_k\} \subset \chi\times \reals^m$ 
computed according to
\begin{align}
\tilde{\cL}_\mu(x_k,y_k)&\leq\inf_{x\in\chi}\tilde{\cL}_\mu(x,y_k)
+\alpha_k, \label{eq:x-update_rock}\\
y_{k+1}&=\left(y_k+\mu f(x_k)\right)_+, \label{eq:y-update_rock}
\end{align}
satisfy $\lim_{k\in\integers_+} p(x_k)=\bar{p}$ and $\limsup_{k\in\integers_+}
f(x_k)\leq \mathbf{0}$ when
\eqref{eq:rockafellar_problem} has a KKT point and  the parameter sequence $\{\alpha_k\}$
satisfies the summability condition
$\sum_{k=1}^{\infty}\sqrt{\mu~\alpha_k}<\infty$. Martinet~\cite{Mar78_1J} later
showed that the summability condition on
parameter sequence $\{\alpha_k\}$ is not necessary. However, in both
\cite{Mar78_1J,Roc73_1J} no iteration complexity result was given for
the algorithm \eqref{eq:x-update_rock}--\eqref{eq:y-update_rock} when $p$
was not continuously twice differentiable.

In this paper we show convergence rate results for an
augmented Lagrangian algorithm where we allow
penalty parameter $\mu$ to be a non-decreasing positive sequence $\{\mu_k\}$. After we
had independently
established these results, which are extensions of our previous results
in~\cite{Ser10_1J}, we became aware of a previous work by
Rockafellar~\cite{Rockafellar1976} where he proposed several different
variants of the algorithm in
\eqref{eq:x-update_rock}--\eqref{eq:y-update_rock} where $\mu$
could be updated between iterations. Rockafellar~\cite{Rockafellar1976}
established that for all non-decreasing positive multiplier sequences
$\{\mu_k\}$ satisfying the summability condition
$\sum_{k=1}^{\infty}\sqrt{\mu_k~\alpha_k}<\infty$, $\{y_k\}$ is bounded
and any limit point of $\{x_k\}$ is optimal to
\eqref{eq:rockafellar_problem}; moreover,
\begin{align}
  \label{eq:rock-rate}
  \max_{i=1,\ldots, m}\{f_i(x_k)\}\leq
  \frac{\norm{y_{k+1}-y_k}_2}{\mu_k},
  \quad p(x_k)-p^* \leq
  \frac{1}{2\mu_k}(\alpha_k+\norm{y_k}_2^2).
\end{align}
Note that the results in \cite{Rockafellar1976} only provide an
\emph{upper bound} on the sub-optimality; no lower bound is
provided. Since the iterates $\{x_k\}$ are only feasible in the limit,
it is possible that $p(x_k) \ll p^\ast$ and establishing a lower bound on the
sub-optimality is critical. Moreover,
Rockafellar~\cite{Rockafellar1976} does not discuss how to compute
iterates satisfying~\eqref{eq:x-update_rock} and assumes that a
black-box oracle produces such iterates; consequently, there are no
basic operation level complexity bounds in~\cite{Rockafellar1976}.

 In this paper, we extend 
  \eqref{eq:rockafellar_problem} 
  to a conic convex program where  $f(x)=Ax-b$, and $\cK$  is a closed,
  convex cone. We show that primal ALCC iterates
  $\{x_k\}\subset\chi$ satisfies $d_\cK(Ax_k-b)\leq\cO(\mu_k^{-1})$ and
  $|p(x_k)-p^*|\leq\cO(\mu_k^{-1})$, i.e. we provide \emph{both} an upper and a
  lower bound, using an inexact stopping condition that is an extension of
  \eqref{eq:x-update_rock}.  \alcc\ calls an optimal first order method, such as
  FISTA~\cite{Beck09_1J}, to compute an iterate
  $x_k$ satisfying a stopping condition similar to
  \eqref{eq:x-update_rock}. By
  carefully selecting the sub-optimality parameter sequence $\{\alpha_k\}$ and the
  penalty parameter sequence $\{\mu_k\}$, we are able to establish a bound
  on the 
  number of generalized
  projections of the form \eqref{eq:nonsmooth-operation} required  
  to obtain an $\epsilon$-feasible and $\epsilon$-optimal solution to
  \eqref{eq:conic_problem}, and also provide an operation level
  complexity bound.

In \cite{Rockafellar1976}, Rockafellar also provides an iteration complexity result for a different inexact augmented Lagrangian method. Given a non-increasing sequence $\{\alpha_k\}$ and a non-decreasing
  sequence $\{\mu_k\}$ such that  $\sum^{\infty}_{k=1}\sqrt{\mu_k~\alpha_k}<\infty$,
  the infeasiblity and suboptimality can be \emph{upper} bounded (see~\eqref{eq:rock-rate})
  when the duals $\{y_k\}$ are updated according to
  \eqref{eq:y-update_rock} and the primal iterates $\{x_k\}$
  satisfy
  \begin{align}
    \inf\{\norm{s}_2:\
    s\in\partial\phi_k(x_k)\}\leq\sqrt{\frac{\alpha_k}{\mu_k}},
    \label{eq:rock-dist}
  \end{align}
  where $\phi_k(x):=
  \tilde{\cL}_{\mu_k}(x,y_k)+\mathbf{1}_\chi(x)+\frac{1}{2\mu_k}\norm{x-x_{k-1}}_2^2$,
  $\tilde{\cL}_{\mu_k}$ is defined in \eqref{eq:Lagrangian_rock} and $\mathbf{1}_\chi$ is the indicator function of the closed convex set $\chi$.
  With this new stopping condition, Rockafellar~\cite{Rockafellar1976} was
  able to establish a \emph{lower} bound $p(x_k)-p^*\geq
  -\cO(\mu_k^{-1})$. Note that the stopping condition \eqref{eq:rock-dist}
  is much stronger than \eqref{eq:x-update_rock} -- in this paper we establish
  the lower bound using the weaker stopping condition \eqref{eq:x-update_rock}.

  First order methods for minimizing
  functions with Lipschitz continuous
  gradients~\cite{Nesterov04,Nesterov05} (and also the  non-smooth
  variants~\cite{Beck09_1J,Tseng08}) can only guarantee convergence in
  function values; therefore, the subgradient condition~(\ref{eq:rock-dist})
  has to be re-stated
  in terms of function values in order to use a first-order algorithm to
  compute the iterates. This is impossible when the objective function is
  non-smooth. Therefore, one cannot establish operational level
  complexity results for a method that uses the gradient stopping condition \eqref{eq:rock-dist}
  with first order methods. 
  Next, consider the case where $p$ is smooth, i.e. $\rho(\cdot)
  =0$. Suppose $\chi=\reals^n$,
  $\grad\gamma$ is Lipschitz continuous with constant $L_\gamma$ and
  $f(x)=Ax-b$. Then, it is easy to establish that $\grad \phi_k$ is also
  Lipschitz  continuous with Lipschitz constant
  $L_\phi=L_\gamma+\mu_k\sigma_{\max}^2(A)+\mu_k^{-1}=\cO(\mu_k)$.
  Since
  $\phi_k(x_k)-\inf_{x\in\reals^n}\phi_k(x)\leq\xi$ implies that
  $\norm{\grad \phi_k(x_k)}_2\leq\sqrt{2L_\phi\xi}$,  in order to
  ensure \eqref{eq:rock-dist} one has to set
  $\xi\leq\frac{1}{2\sigma_{\max}^2(A)}\frac{\alpha_k}{\mu_k^2}$. Thus,
  the complexity of computing each iterate $x_k$ satisfying \eqref{eq:rock-dist} will be significantly
  higher than the complexity of computing $x_k$ satisfying \eqref{eq:x-update_rock}, which is the one used in the \alcc.
  Therefore,
  although Rockafellar's method using \eqref{eq:rock-dist} has the same iteration complexity with
  \alcc, the operational level complexity of a first-order algorithm based on
  the gradient stopping criterion~\eqref{eq:rock-dist} will be
  significantly higher than the complexity of the \alcc\
  where $\xi=\alpha_k$. In summary, Rockafellar~\cite{Rockafellar1976} is
  only able to show an
  upper bound on sub-optimality of iterates for the stopping
  criterion~\eqref{eq:x-update_rock} that leads to an efficient algorithm;
  whereas the subgradient stopping criterion~\eqref{eq:rock-dist} that
  results in a lower bound is not practical for a first-order algorithm.

In \cite{Lan11_1J}, Lan, Lu and Monteiro 
consider problems of the form
\begin{equation}
  \label{eq:lan-et-al-11}
  \min\{\fprod{c,x}:~Ax=b,~x\in\cK\},
\end{equation}
where $\cK$ is a closed convex cone.
They proposed 
computing an approximate solution for \eqref{eq:lan-et-al-11} by minimizing the Euclidean
distance to the set of KKT points using Nesterov's
accelerated proximal gradient algorithm~(APG)
\cite{Nesterov04,Nesterov05}. They show that at most
$\cO\left(\epsilon^{-1}\right)$ iterations of Nesterov's APG
algorithm~\cite{Nesterov04,Nesterov05} suffice to compute a point
whose distance to the set of KKT points is at most $\epsilon>0$. In
\cite{Lan12_1J}, Lan and Monteiro proposed a first-order penalty
method to solve the following more general problem
\begin{equation}
\label{eq:conic_problem_lan}
\min\{\gamma(x):~Ax-b\in\cK,~x\in\chi\},
\end{equation}
where $\gamma$ is a convex function with Lipschitz continuous
gradient, $\cK$ is a closed, convex cone, $\chi$ is a simple convex
compact set and $A\in\reals^{m\times n}$. In order to solve
\eqref{eq:conic_problem_lan}, they used Nesterov's APG algorithm on
the
perturbed penalty problem
\begin{equation*}
\min\{\gamma(x)+\xi\norm{x-x_0}_2^2+\frac{\mu}{2}~d_{\cK}(Ax-b)^2:~x\in\chi\},
\end{equation*}
where $x_0\in\chi$, $d_\cK$ is as defined in \eqref{eq:dist_func}, and
$\xi>0$, $\mu>0$ are fixed perturbation and penalty parameters. They
showed that Nesterov's APG algorithm can compute a primal-dual
solution $(\tilde{x},\tilde{y})\in\chi\times\cK^*$ satisfying
$\epsilon$-perturbed KKT conditions
\begin{equation}
\label{eq:pertubed-KKT}
\fprod{\tilde{y},~\Pi_\cK(A\tilde{x}-b)}=0,\quad
d_\cK(A\tilde{x}-b)\leq\epsilon,\quad \grad
\gamma(\tilde{x})-A^T\tilde{y}\in-\cN_\chi(\tilde{x})+\cB(\epsilon),
\end{equation}
using $\cO\left(\epsilon^{-1}\log\left(\epsilon^{-1}\right)\right)$
projections onto $\cK$ and $\chi$, where
$\cN_\chi(\tilde{x}):=\{s\in\reals^n:~\fprod{s, x-\tilde{x}}\leq
0,~\forall x\in\chi\}$ and
$\cB(\epsilon):=\{x\in\reals^n:~\norm{x}_2\leq\epsilon\}$. Note that
since $\xi$ and $\mu$ are fixed, additional iterations of the
Nesterov's APG algorithm will not improve the quality of the
solution.

The optimization problem \eqref{eq:conic_problem_lan} is a special case of
\eqref{eq:conic_problem} with $\rho(\cdot)= 0$. 
Thus, ALCC can solve \eqref{eq:conic_problem_lan}. We show that every
limit point of the ALCC iterates are optimal for
\eqref{eq:conic_problem_lan}. Furthermore,  for \emph{any} $\epsilon >0$,
ALCC iterates are  $\epsilon$-optimal,
 and $\epsilon$-feasible for \eqref{eq:conic_problem_lan}
within $\cO\left(\epsilon^{-1}\log\left(\epsilon^{-1}\right)\right)$
projections onto $\cK$ and $\chi$ as
is the case with the algorithm proposed in \cite{Lan12_1J}.

Lan and Monteiro~\cite{Lan12_2J} proposed an inexact augmented
Lagrangian method to solve a special case of \eqref{eq:conic_problem}
with $\cK=\{\mathbf{0}\}$ and $\rho(\cdot)=0$; and showed that
Nesterov's APG algorithm can compute a primal-dual solution
$(\tilde{x},\tilde{y})\in\chi\times\reals^m$ satisfying
\eqref{eq:pertubed-KKT} using
$\cO\left(\epsilon^{-1}\left(\log\left(\epsilon^{-1}\right)\right)^{\frac{3}{4}}
  \log\log\left(\epsilon^{-1}\right)\right)$
projections onto $\chi$ and $\cK$.

Aybat and Iyengar~\cite{Ser10_1J}  proposed an inexact augmented
Lanrangian algorithm (FALC) to solve the
composite norm minimization problem
\begin{equation}
\label{eq:composite-norm}
\min_{X\in\reals^{m\times
    n}}\{\mu_1\norm{\sigma(\cF(X)-G)}_\alpha+\mu_2\norm{\cC(X)-d}_\beta+\gamma(X):\
\cA(X)-b\in\cQ\},
\end{equation}
where the function $\sigma(\cdot)$ returns the singular values of its
argument; $\alpha$ and $\beta\in\{1,2,\infty\}$; $\cA,\cC,\cF$ are
linear operators such that either $\cC$ or $\cF$ is injective, and
$\cA$ is surjective; $\gamma$ is a convex function with a Lipschitz
continuous gradient and $\cQ$ is a closed convex set. It was shown
that any limit point of the FALC iterates is an optimal solution of
the composite norm minimization problem~\eqref{eq:composite-norm}; and
for all $\epsilon>0$, the FALC iterates are $\epsilon$-feasible and
$\epsilon$-optimal after
$\cO\left(\log\left(\epsilon^{-1}\right)\right)$ FALC iterations,
which require $\cO\left(\epsilon^{-1}\right)$ shrinkage type
operations and Euclidean projection onto the set $\cQ$. The limitation
of FALC is that it requires $\cA$ to be a surjective mapping.
Consider a feasible set of the form
\begin{equation}
\label{eq:product-cone}
\{x\in\reals^n:~A_1x-b_1\in\cK_1,\ A_2x-b_2\in\cK_2,\ x\in\chi\},
\end{equation}
where $\cK_i$ is a closed convex cone, $A_i\in\reals^{m_i\times n}$ and
$b_i\in\reals^{m_i}$ for $i=1,2$. The set in \eqref{eq:product-cone} can
be reformulated as the feasible set in \eqref{eq:conic_problem} by
choosing
$A=\left(
\begin{array}{c}
    A_1 \\
    A_2 \\
\end{array}
\right)$ and $\cK=\cK_1\times\cK_2$, where $m=m_1+m_2$. FALC can work with such a set
only if $A$ has linearly independent rows, i.e., $\Rank(A)=m_1+m_2$. This
is a severe limitation for the
practical problem. On the other hand, the \alcc~works for the feasible sets
of the form \eqref{eq:product-cone} without any additional
assumption. Thus, ALCC can be used to solve much larger class of
optimization problems.

In our opinion the \alcc~proposed in this paper unifies all the previous work on fast first-order penalty and/or augmented Lagrangian algorithms for solving optimization problems that are special cases of \eqref{eq:conic_problem}. We do not impose any regularity conditions on the constraint matrix $A$ and the projection step \eqref{eq:nonsmooth-operation} is the natural extension of the gradient projection step. We believe that this unified treatment will spur further research in understanding the limits of performance of the first order algorithms for general conic problems.
\section{Preliminaries}
\label{sec:preliminaries}
In Section~\ref{sec:apg}, first we briefly discuss a variant of Nesterov's APG
algorithm~\cite{Nesterov04,Nesterov05} to solve
\eqref{eq:conic_problem} without conic constraints. Next, we introduce
a dual function for the conic problem in \eqref{eq:conic_problem} and
establish some of its properties in Section~\ref{sec:prelims}. The definitions and the results of Section~\ref{sec:prelims}
are extensions of the corresponding definitions and results 
in \cite{Rockafellar-73,Roc73_1J}, 
to the case where $\cK\subset\reals^m$ is a general closed, convex cone.
\subsection{Accelerated Proximal Gradient~(APG) algorithm}
\label{sec:apg}
\begin{figure}[ht]
    \rule[0in]{6.5in}{1pt}\\
    \textbf{Algorithm APG}$(\bar{\rho}, \bar{\gamma}, \chi, x_0,\textsc{stop})$\\
    \rule[0.125in]{6.5in}{0.1mm}
    \vspace{-0.25in}
    {\small
    \begin{algorithmic}[1]
    \STATE $x_0^{(1)}\gets x_0$, $x_1^{(2)} \gets x_0$, $t_1\gets 1$, $\ell\gets 0$
    \WHILE{\textsc{stop} is \FALSE} \label{algeq:stop}
    \STATE $\ell \gets \ell + 1$
    \STATE $x_\ell^{(1)} \gets \argmin\left\{\bar{\rho}(x)+\fprod{\nabla
        \bar{\gamma}\left(x_\ell^{(2)}\right),~x-x_\ell^{(2)}} +
      \frac{L_{\bar{\gamma}}}{2}~\norm{x-x_\ell^{(2)}}_2^2: x\in
      \chi\right\}$ \label{algeq:tseng_z}
    \STATE $t_{\ell+1}\gets \left(1+\sqrt{1+4~t^2_{\ell}}\right)/2$
    \STATE $x_{\ell+1}^{(2)} \gets x_\ell^{(1)} +
    \left(\frac{t_{\ell}-1}{t_{\ell+1}}\right)\left(x_\ell^{(1)}-x_{\ell-1}^{(1)}\right)$
    \ENDWHILE
    \end{algorithmic}
    \rule[0.25in]{6.5in}{0.1mm}
    }
    \vspace{-0.5in}
    \caption{Accelerated Proximal Gradient Algorithm}\label{alg:apg}
    \vspace{-0.25in}
\end{figure}
In this section we state and briefly discuss the details of a particular
implementation of Fast Iterative Shrinkage-Thresholding
Algorithm~\cite{Beck09_1J} (FISTA), which extends Nesterov's
accelerated proximal gradient algorithm~\cite{Nesterov04,Nesterov05}
for minimizing smooth convex functions over simple convex sets, to
solve non-smooth convex minimization problems.

FISTA computes an $\epsilon$-optimal solution to
$\min\{\bar{\rho}(x)+\bar{\gamma}(x):\ x\in\reals^n\}$
in $\cO\left(\epsilon^{-\frac{1}{2}}\right)$ iterations, where
$\bar{\rho}:\reals^n \rightarrow\reals$ and $\bar{\gamma}:\reals^n
\rightarrow\reals$ are continuous convex functions such that $\grad
\bar{\gamma}$ is Lipschitz continuous on $\reals^n$ with constant
$L_{\bar{\gamma}}$.  Tseng~\cite{Tseng08} showed that this rate
result for FISTA also holds when $\bar{\rho}:\reals^n \rightarrow
(-\infty, +\infty]$ and $\bar{\gamma}:\reals^n \rightarrow
(-\infty, +\infty]$ are proper, lower semicontinuous, and convex
functions such that $\dom \bar{\rho}$ is closed and $\grad
\bar{\gamma}$ is Lipschitz
continuous on $\reals^n$.

This extended version of FISTA is displayed in Figure~\ref{alg:apg} as \apg. Hence, FISTA can solve constrained
problems of the form
\begin{align}
\label{eq:tseng_problem}
\min\{\bar{\rho}(x)+\bar{\gamma}(x):\ x\in\chi\},
\end{align}
where $\chi\subset\reals^n$ is a simple closed convex set.

The \apg~displayed in Figure~\ref{alg:apg} takes as
input the functions $\bar{\rho}$ and $\bar{\gamma}$, the simple closed
convex set $\chi\subset\reals^n$, an initial iterate
$x^{(0)}\in\chi$ and a stopping criterion \textsc{stop}.
Lemma~\ref{lem:tseng_corollary} gives the iteration complexity of
the \apg.
\begin{lemma}
\label{lem:tseng_corollary}
Let $\bar{\rho}$ and $\bar{\gamma}$ be a proper, closed,
convex functions
such that $\dom \bar{\rho}$ is closed and $\grad \bar{\gamma}$ is
Lipschitz continuous on
$\reals^n$ with constant $L_{\bar{\gamma}}$. Fix $\epsilon>0$ and let
$\{x_\ell^{(1)},x_\ell^{(2)}\}$ denote the sequence of iterates
computed by the \apg\ when $\textsc{stop}$
is disabled. Then
$\bar{\rho}\big(x_\ell^{(1)}\big)+\bar{\gamma}\big(x_\ell^{(1)}\big)\leq
\min\{\bar{\rho}(x)+\bar{\gamma}(x):\ x\in\chi\}+\epsilon$ whenever
$\ell\geq\sqrt{\frac{2L_{\bar{\gamma}}}{\epsilon}}~\norm{x^\ast-x_0}_2-1$,
where $x^\ast
\in \argmin\{\bar{\rho}(x) + \bar{\gamma}(x):\ x\in\chi\}$.
\end{lemma}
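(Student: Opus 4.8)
The plan is to recognize this statement as an immediate corollary of the $\cO(1/\ell^2)$ convergence rate established for FISTA by Beck and Teboulle~\cite{Beck09_1J} and extended to the proper, closed, convex setting by Tseng~\cite{Tseng08}, combined with elementary algebra that converts the rate into the claimed iteration count. The first step is to absorb the feasible set $\chi$ into the nonsmooth term: the update in line~4 of the \apg\ is exactly the proximal-gradient step for the composite objective whose nonsmooth part is $\bar{\rho}+\mathbf{1}_\chi$ and whose smooth part is $\bar{\gamma}$, since minimizing $\bar{\rho}(x)+\fprod{\grad\bar{\gamma}(x_\ell^{(2)}),~x-x_\ell^{(2)}}+\frac{L_{\bar{\gamma}}}{2}\norm{x-x_\ell^{(2)}}_2^2$ over $x\in\chi$ coincides with the unconstrained proximal step for $\bar{\rho}+\mathbf{1}_\chi$. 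Because $\dom\bar{\rho}$ is closed and $\chi$ is a nonempty closed convex set, the function $\bar{\rho}+\mathbf{1}_\chi$ is proper, closed, and convex, and $\bar{\gamma}$ still has a Lipschitz continuous gradient with constant $L_{\bar{\gamma}}$; hence the hypotheses of Tseng's generalization, as recalled in the discussion preceding the lemma, are satisfied.

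Next I would invoke the rate result itself. Writing $F:=\bar{\rho}+\bar{\gamma}$ and $F^\ast:=\min\{F(x):\ x\in\chi\}$, the accelerated proximal gradient guarantee for the momentum weights $t_\ell$ generated in line~5 yields
\[
F\big(x_\ell^{(1)}\big) - F^\ast \leq \frac{2L_{\bar{\gamma}}\,\norm{x_0-x^\ast}_2^2}{(\ell+1)^2}
\]
for every $\ell\geq 1$, where $x^\ast\in\argmin\{F(x):\ x\in\chi\}$. This inequality is the heart of the argument, and essentially all the analytical content is carried by the cited theorems; the only thing to verify here is that the fixed step length $1/L_{\bar{\gamma}}$ used in line~4 is the one for which the constant $2L_{\bar{\gamma}}$ in the numerator is correct, and that the momentum recursion in line~5 matches the weighting scheme under which the bound is proved.

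Finally, the conclusion follows by forcing the right-hand side to be at most $\epsilon$. Solving $\frac{2L_{\bar{\gamma}}\norm{x_0-x^\ast}_2^2}{(\ell+1)^2}\leq\epsilon$ for $\ell$ gives $\ell+1\geq\sqrt{\frac{2L_{\bar{\gamma}}}{\epsilon}}\,\norm{x^\ast-x_0}_2$, i.e.\ $\ell\geq\sqrt{\frac{2L_{\bar{\gamma}}}{\epsilon}}\,\norm{x^\ast-x_0}_2-1$, which is precisely the stated threshold. I do not expect a genuine obstacle in this proof: the entire difficulty was resolved in~\cite{Beck09_1J,Tseng08}, and the remaining work is the bookkeeping reduction to $\bar{\rho}+\mathbf{1}_\chi$ together with matching the numerical constant $2L_{\bar{\gamma}}$. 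The only point requiring care is confirming that Tseng's extension legitimately covers the nonsmooth, constrained case at hand, so that the $\cO(1/\ell^2)$ rate is available for a merely proper closed convex $\bar{\rho}$ with closed domain rather than a continuous one.
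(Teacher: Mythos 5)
Your proposal is correct and follows essentially the same route as the paper, which simply defers to Corollary~3 of Tseng and Theorem~4.4 of Beck--Teboulle for the $\cO(1/\ell^2)$ rate and leaves the reduction to $\bar{\rho}+\mathbf{1}_\chi$ and the algebraic inversion of the bound implicit. You have merely written out those (correct) bookkeeping steps explicitly.
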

\begin{proof}
See Corollary~3 in \cite{Tseng08} and Theorem 4.4 in \cite{Beck09_1J} for
the details of proof.\end{proof}

\subsection{A dual function for conic convex programs and its properties}
\label{sec:prelims}
For all $\mu\geq 0$, optimization problem $(P)$ in
\eqref{eq:conic_problem} is equivalent to
\begin{equation}
\label{eq:penalty_problem}
\min \left\{ p(x)+\frac{\mu}{2}\norm{Ax-s-b}_2^2 :  Ax-s=b,~x \in \chi,~s
  \in \cK\right\}.
\end{equation}
Let $y\in\reals^m$ denote a Lagrangian dual variable corresponding to
the equality constraint in \eqref{eq:penalty_problem}, and let 
\begin{equation}
\label{eq:Lmu_def}
\cL_\mu(x,y):=\min_{s\in\cK} \left\{ p(x)
-\fprod{y, Ax-s-b}+\frac{\mu}{2}\norm{Ax-s-b}_2^2\right\}
\end{equation}
denote the ``penalty'' Lagrangian function for
\eqref{eq:penalty_problem} with $\dom \cL_\mu=\chi \times \reals^m$. 
For $\mu>0$,
\begin{align}
\cL_\mu(x,y)&=p(x)
+\frac{\mu}{2}~\left ( \min_{s\in\cK}
  \left\|Ax-s-b-\frac{y}{\mu}\right\|_2^2-\frac{\norm{y}_2^2}{\mu^2}
\right),
\nonumber\\
&=p(x)
+\frac{\mu}{2}~d_\cK\left(Ax-b-\frac{y}{\mu}\right)^2
-\frac{\norm{y}_2^2}{2\mu}, \label{eq:L_mu}
\end{align}
where $d_{\cK}(\cdot)$ is the distance function defined
in~\eqref{eq:dist_func}.
When $\mu = 0$,  the definition in \eqref{eq:Lmu_def} implies that
\begin{equation}
  \label{eq:lagrangian_0}
  \cL_0(x,y)=\left\{
    \begin{array}{ll}
      p(x)-\fprod{y, Ax-b}, & y\in\cK^*, \\
      -\infty, & \mbox{otherwise.}
    \end{array}
  \right.
\end{equation}
For $\mu \geq 0$, we define a dual function $g_\mu:\reals^m\rightarrow\reals$ for \eqref{eq:conic_problem} such that
\begin{equation}
\label{eq:g_mu}
g_\mu(y):=\inf_{x\in\chi}\cL_\mu(x, y).
\end{equation}
Note that from \eqref{eq:lagrangian_0} it follows that $g_0$ is the
Lagrangian dual function of $(P)$.

The definitions above and the results detailed below are immediate extensions of
corresponding definitions and results in~\cite{Rockafellar-73}, given for $\cK=\reals^m_+$, to the case where $\cK$ is a general closed convex cone.
We state and prove the extensions here for the sake of completeness.
These results are used in Section~\ref{sec:alcc} to establish the convergence properties of ALCC iterate sequence.
\begin{lemma}
\label{lem:L-cvx}
For all $\mu \geq 0$, $x\in\chi$ and $y\in\reals^m$, $\cL_\mu$ defined in \eqref{eq:Lmu_def} satisfies
\begin{align}
\label{eq:LmuF}
\cL_\mu(x,y)=\inf_{u\in\reals^m} \left\{F_\mu(x,u)-\fprod{y,u}\right\},
\end{align}
where $F_\mu:\chi\times \reals^m\rightarrow\reals\cup\{+\infty\}$ is defined as follows
\begin{equation}
\label{eq:Fmu}
F_\mu(x,u):=\left\{
              \begin{array}{ll}
                p(x)+\frac{\mu}{2}\norm{u}_2^2, & \hbox{if $Ax-b\in\cK+u$}, \\
                +\infty, & \hbox{otherwise.}
              \end{array}
            \right.
\end{equation}
Hence, $\cL_\mu(x,y)$ is convex in $x\in\chi$ and concave in
$y\in\reals^m$, and $g_\mu(y)$ defined in
\eqref{eq:g_mu} is concave in $y\in\reals^m$.
\end{lemma}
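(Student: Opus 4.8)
The plan is to first establish the representation \eqref{eq:LmuF} by a direct change of variables in the definition \eqref{eq:Lmu_def}, and then to read off all three consequences---convexity in $x$, concavity in $y$, and concavity of $g_\mu$---from the single structural fact that $F_\mu$ is jointly convex in $(x,u)$ while the coupling term $\fprod{y,u}$ is bilinear.

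First I would prove \eqref{eq:LmuF}. In \eqref{eq:Lmu_def} the minimization runs over $s\in\cK$; substituting $u:=Ax-s-b$, so that $s=Ax-b-u$, the integrand becomes $p(x)+\frac{\mu}{2}\norm{u}_2^2-\fprod{y,u}$, while the constraint $s\in\cK$ turns into $Ax-b-u\in\cK$, i.e. $Ax-b\in\cK+u$. As $s$ ranges over $\cK$, the image $u=(Ax-b)-s$ ranges over exactly $\{u:\ Ax-b\in\cK+u\}$, so the substitution is a bijection onto the effective domain of $F_\mu(x,\cdot)$. Comparing with \eqref{eq:Fmu}, the transformed objective equals $F_\mu(x,u)-\fprod{y,u}$ on this feasible set and $+\infty$ off it, which is \eqref{eq:LmuF}.

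Next I would establish the joint convexity of $F_\mu$ on $\chi\times\reals^m$. Its effective domain $\{(x,u):\ x\in\chi,\ Ax-b-u\in\cK\}$ is convex, being the intersection of $\chi\times\reals^m$ with the preimage of the convex cone $\cK$ under the affine map $(x,u)\mapsto Ax-b-u$; on this domain the value $p(x)+\frac{\mu}{2}\norm{u}_2^2$ is convex because $p=\rho+\gamma$ is convex and $\mu\geq 0$. Concavity of $\cL_\mu$ in $y$ is then immediate from \eqref{eq:LmuF}: for each fixed $x$ and each fixed $u$ the map $y\mapsto F_\mu(x,u)-\fprod{y,u}$ is affine, and a pointwise infimum of affine functions is concave; this argument remains valid in the extended-real-valued case $\mu=0$, where $\cL_0(x,\cdot)$ may equal $-\infty$ off $\cK^\ast$. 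Convexity of $\cL_\mu$ in $x$ follows from partial minimization: for fixed $y$ the function $(x,u)\mapsto F_\mu(x,u)-\fprod{y,u}$ is jointly convex, and minimizing a jointly convex function over one block of variables leaves a convex function of the remaining block.

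Finally, concavity of $g_\mu$ follows by one more application of the same idea: combining $g_\mu(y)=\inf_{x\in\chi}\cL_\mu(x,y)$ from \eqref{eq:g_mu} with \eqref{eq:LmuF} gives $g_\mu(y)=\inf_{x\in\chi,\,u\in\reals^m}\{F_\mu(x,u)-\fprod{y,u}\}$, an infimum of functions each affine in $y$, hence concave. I expect the main---and only mild---obstacle to be the bookkeeping in the change of variables, namely verifying that the feasibility set for $u$ coincides exactly with $\dom F_\mu(x,\cdot)$ so that no points are spuriously added or dropped, together with a careful invocation of the partial-minimization theorem so that the convexity conclusion persists even when the infimum over $u$ is not attained or takes the value $-\infty$.
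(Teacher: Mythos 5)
Your proof is correct, and for most of the lemma it coincides with the paper's: the representation \eqref{eq:LmuF} is obtained by the same change of variables $u=Ax-s-b$ (which the paper dismisses as immediate from the definition of $F_\mu$, whereas you spell out the bijection between $s\in\cK$ and $\dom F_\mu(x,\cdot)$), and concavity of $\cL_\mu$ in $y$ and of $g_\mu$ is in both cases an infimum of affine (resp.\ concave) functions. The one genuine divergence is the convexity of $\cL_\mu(\cdot,y)$: the paper argues directly from the explicit formula \eqref{eq:L_mu}, using that $p$ and $d_\cK(\cdot)^2$ composed with an affine map are convex for $\mu>0$, and treats $\mu=0$ separately via \eqref{eq:lagrangian_0}; you instead note that $F_\mu(x,u)-\fprod{y,u}$ is jointly convex in $(x,u)$ and invoke preservation of convexity under partial minimization. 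Your route is more uniform --- it handles $\mu=0$ and $\mu>0$ in one stroke and derives everything from the single representation \eqref{eq:LmuF} --- at the cost of appealing to the infimal-projection theorem in its extended-real-valued form (needed since the infimum may be $-\infty$ or unattained, as you correctly flag); the paper's route is more elementary but requires the separate computation \eqref{eq:L_mu} and a case split. Both are complete.
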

\begin{proof}
The representation in \eqref{eq:LmuF} trivially follows from the definition of $F_\mu$
in \eqref{eq:Fmu}. For a fixed $x\in\chi$, \eqref{eq:Lmu_def} implies that $\cL_\mu(x,y)$ is the
infimum  of  affine functions of $y$, hence $\cL_\mu(x, y)$ is concave in $y$. Hence, $g_\mu$ defined in
\eqref{eq:g_mu} is the infimum of concave functions; therefore, it is also concave. For a fixed $y\in\reals^m$, when $\mu>0$, convexity of $\cL_\mu(x,y)$ in $x$ follows from \eqref{eq:L_mu} and the fact that $p(\cdot)$ and $d_\cK(\cdot)$ are convex functions; otherwise, when $\mu=0$, it trivially follows from \eqref{eq:lagrangian_0}.
\end{proof}
\begin{lemma}
\label{lem:prox}
Let $g:\reals^m\rightarrow\reals\cup\{+\infty\}$ be a proper closed convex
function. For $\mu > 0$, let
\eq
\psi_\mu(y)=\min_{z\in\reals^m}\Big\{g(z)+\frac{1}{2\mu}\norm{z-y}_2^2\Big\}, \quad \pi_\mu(y) = \argmin_{z\in\reals^m}\Big\{g(z)+\frac{1}{2\mu}\norm{z-y}_2^2\Big\}
\en
denote the Moreau regularization of and the proximal map corresponding to $g$, respectively. Then, for all $y_1,y_2\in\reals^m$,
\begin{equation}
\label{eq:prox}
\norm{\pi_\mu(y_1)-\pi_\mu(y_2)}_2^2+\norm{\pi^c_\mu(y_1)-\pi^c_\mu(y_2)}_2^2\leq\norm{y_1-y_2}_2^2,
\end{equation}
where $\pi^c_\mu(y) := y-\pi_\mu(y)$ for all $z\in\reals^m$. Moreover,
$\psi_\mu:\reals^m\rightarrow\reals$ is an everywhere finite, differentiable convex function such that
\begin{equation}
\label{eq:moreau-grad}
\grad\psi_\mu(y)=\frac{1}{\mu}~(y-\pi_\mu(y))=\frac{1}{\mu}~\pi^c_\mu(y),
\end{equation}
is Lipschitz continuous with constant $\frac{1}{\mu}$.
\end{lemma}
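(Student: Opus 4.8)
The plan is to derive everything from the first-order optimality condition for the strongly convex minimization that defines $\pi_\mu$. Since $z\mapsto g(z)+\frac{1}{2\mu}\norm{z-y}_2^2$ is strongly convex, its unique minimizer $\pi_\mu(y)$ is characterized by $0\in\partial g(\pi_\mu(y))+\frac1\mu(\pi_\mu(y)-y)$, i.e. $\frac1\mu\pi^c_\mu(y)=\frac1\mu(y-\pi_\mu(y))\in\partial g(\pi_\mu(y))$. I would record this identity first, since both the firm-nonexpansiveness bound \eqref{eq:prox} and the gradient formula \eqref{eq:moreau-grad} flow from it, while the two Lipschitz claims are then immediate corollaries of \eqref{eq:prox}.

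First I would prove \eqref{eq:prox}. Writing $z_i:=\pi_\mu(y_i)$ and $s_i:=\frac1\mu(y_i-z_i)\in\partial g(z_i)$, monotonicity of the subdifferential of the convex function $g$ gives $\fprod{s_1-s_2,\ z_1-z_2}\ge 0$. Substituting $s_i=\frac1\mu(y_i-z_i)$ and setting $a:=y_1-y_2$ and $b:=z_1-z_2=\pi_\mu(y_1)-\pi_\mu(y_2)$, this reads $\fprod{a-b,\ b}\ge 0$, i.e. $\fprod{a,b}\ge\norm{b}_2^2$. Since $\pi^c_\mu(y_1)-\pi^c_\mu(y_2)=a-b$, expanding $\norm{b}_2^2+\norm{a-b}_2^2=\norm{a}_2^2-2(\fprod{a,b}-\norm{b}_2^2)\le\norm{a}_2^2$ yields exactly \eqref{eq:prox}. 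In particular $\norm{\pi_\mu(y_1)-\pi_\mu(y_2)}_2\le\norm{y_1-y_2}_2$ and $\norm{\pi^c_\mu(y_1)-\pi^c_\mu(y_2)}_2\le\norm{y_1-y_2}_2$, so both $\pi_\mu$ and $\pi^c_\mu$ are nonexpansive.

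Next I would treat $\psi_\mu$. Finiteness is a sandwich: evaluating the inner objective at any fixed $z_0\in\dom g$ bounds $\psi_\mu(y)$ above by $g(z_0)+\frac{1}{2\mu}\norm{z_0-y}_2^2<\infty$, while an affine minorant of $g$ (which exists since $g$ is proper, closed, convex) makes the inner objective coercive and bounded below, so the infimum is finite and attained. Convexity of $\psi_\mu$ follows because it is the partial minimization over $z$ of the function $g(z)+\frac{1}{2\mu}\norm{z-y}_2^2$, which is jointly convex in $(z,y)$. For the gradient formula I would verify directly that $s:=\frac1\mu\pi^c_\mu(y)$ is a subgradient of $\psi_\mu$ at $y$: replacing $g$ by the affine minorant at $z=\pi_\mu(y)$ coming from $s\in\partial g(z)$ turns the definition of $\psi_\mu(y')$ into an explicit quadratic minimization in the inner variable, whose optimal value equals precisely $\psi_\mu(y)+\fprod{s,\ y'-y}$; hence $\psi_\mu(y')\ge\psi_\mu(y)+\fprod{s,\ y'-y}$ for all $y'$, i.e. $\frac1\mu\pi^c_\mu(y)\in\partial\psi_\mu(y)$.

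The step I expect to be the main obstacle is upgrading ``$\frac1\mu\pi^c_\mu(y)$ is a subgradient'' to ``$\psi_\mu$ is differentiable with this subgradient as its gradient,'' since a priori $\partial\psi_\mu(y)$ could contain more than one element. I would close this either by invoking that $\pi^c_\mu$ is continuous (indeed nonexpansive, by \eqref{eq:prox}), so that $y\mapsto\frac1\mu\pi^c_\mu(y)$ is a continuous single-valued selection of $\partial\psi_\mu$, which forces the finite convex function $\psi_\mu$ to be differentiable with $\grad\psi_\mu=\frac1\mu\pi^c_\mu$; or, equivalently, by noting that $\psi_\mu^\ast=g^\ast+\frac\mu2\norm{\cdot}_2^2$ is $\mu$-strongly convex, so its conjugate $\psi_\mu$ is differentiable with a $\frac1\mu$-Lipschitz gradient. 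Either way \eqref{eq:moreau-grad} holds, and the Lipschitz constant then drops out of \eqref{eq:prox}, since $\norm{\grad\psi_\mu(y_1)-\grad\psi_\mu(y_2)}_2=\frac1\mu\norm{\pi^c_\mu(y_1)-\pi^c_\mu(y_2)}_2\le\frac1\mu\norm{y_1-y_2}_2$.
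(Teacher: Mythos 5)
Your proposal is correct and complete; note, though, that the paper does not actually prove this lemma---it simply cites Rockafellar (1976) for the firm-nonexpansiveness inequality \eqref{eq:prox} and Hiriart-Urruty and Lemar\'echal for the remaining claims about the Moreau envelope. What you have written is a self-contained version of the standard arguments behind those citations, and every step checks out: the optimality condition $\frac{1}{\mu}(y-\pi_\mu(y))\in\partial g(\pi_\mu(y))$ combined with monotonicity of $\partial g$ gives $\fprod{a,b}\geq\norm{b}_2^2$ with $a=y_1-y_2$, $b=\pi_\mu(y_1)-\pi_\mu(y_2)$, and the algebraic identity $\norm{b}_2^2+\norm{a-b}_2^2=\norm{a}_2^2-2(\fprod{a,b}-\norm{b}_2^2)$ yields \eqref{eq:prox}; the sandwich argument for finiteness, joint convexity for convexity of $\psi_\mu$, and the explicit quadratic minimization showing $\frac{1}{\mu}\pi^c_\mu(y)\in\partial\psi_\mu(y)$ are all sound. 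You are also right to flag the upgrade from subgradient to gradient as the only delicate point, and both of your proposed closings work: a continuous single-valued selection of the subdifferential of a finite convex function forces differentiability (squeeze the difference quotient between $\fprod{s(y),d}$ and $\fprod{s(y+td),d}$), and the conjugate identity $\psi_\mu^\ast=g^\ast+\frac{\mu}{2}\norm{\cdot}_2^2$ (conjugate of an infimal convolution) gives $\mu$-strong convexity of $\psi_\mu^\ast$ and hence $\frac{1}{\mu}$-Lipschitz differentiability of $\psi_\mu=\psi_\mu^{\ast\ast}$ directly. The only thing your write-up buys beyond the paper is transparency; the only thing it costs is length.
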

\begin{proof}
The proof of \eqref{eq:prox} is given in \cite{Rockafellar-76} and the rest of the claims including \eqref{eq:moreau-grad} are shown in \cite{Hiriart-Urruty-Lemarechal-1993}.
\end{proof}
\begin{theorem}
\label{thm:moreau}
Suppose Assumption~\ref{asp:KKT} holds.
Then, for any $\mu>0$, $g_\mu$ is an everywhere finite, continuously differentiable concave function and $g_\mu$ achieves its maximum value at any KKT
point. Moreover,
\begin{equation}
\label{eq:g_moreau}
g_\mu(y)=\max_{z\in\reals^m}
\left\{g_0(z)-\frac{1}{2\mu}\norm{z-y}_2^2 \right\},
\end{equation}
and
\begin{equation}
\label{eq:grad_gk}
\grad g_\mu(y)=-\frac{1}{\mu}(y-\pi_\mu(y)),
\end{equation}
is Lipschitz continuous with Lipschitz constant equal to $\frac{1}{\mu}$, where $\pi_\mu(y)\in\cK^*$ denotes the unique maximizer in \eqref{eq:g_moreau}.
\end{theorem}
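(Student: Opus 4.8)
The plan is to reduce the entire statement to the single identity \eqref{eq:g_moreau}, which exhibits $g_\mu$ as the Moreau regularization of the ordinary dual function $g_0$. Once \eqref{eq:g_moreau} is available, the finiteness, $C^1$ regularity, concavity, the gradient formula \eqref{eq:grad_gk} and its Lipschitz constant all follow by applying Lemma~\ref{lem:prox} to the convex function $-g_0$, while the statement about KKT points follows from weak duality. So the first order of business is to prove \eqref{eq:g_moreau}.

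To establish \eqref{eq:g_moreau} I would start from the representation of $\cL_\mu$ in Lemma~\ref{lem:L-cvx}. Inserting \eqref{eq:LmuF}--\eqref{eq:Fmu} into the definition \eqref{eq:g_mu} and merging the infimum over $x\in\chi$ with the infimum over $u$ gives
\[
g_\mu(y)=\inf_{u\in\reals^m}\Big\{h(u)+\tfrac{\mu}{2}\norm{u}_2^2-\fprod{y,u}\Big\}=-\big(h+q\big)^*(y),
\]
where $q(u):=\tfrac{\mu}{2}\norm{u}_2^2$ and $h(u):=\inf\{p(x):\ x\in\chi,\ Ax-b-u\in\cK\}$ is the optimal-value function of $(P)$ under a right-hand-side perturbation $u$ (it is convex as the partial minimization of a jointly convex function). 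Next I would compute the conjugate of $h$: writing $s=Ax-b-u\in\cK$, the supremum defining $h^*$ splits into $\sup_{x\in\chi}\{\fprod{z,Ax-b}-p(x)\}$ plus $\sup_{s\in\cK}\fprod{-z,s}$, and the latter equals $0$ when $z\in\cK^*$ and $+\infty$ otherwise; comparing with \eqref{eq:lagrangian_0} this shows $h^*(z)=-g_0(z)$. Since $q$ is finite-valued and continuous everywhere, standard conjugate calculus gives $(h+q)^*=h^*\,\square\,q^*$ with the infimal convolution \emph{exact}, and because $q^*(z)=\tfrac{1}{2\mu}\norm{z}_2^2$ this reads $(h+q)^*(y)=\min_{z}\{-g_0(z)+\tfrac{1}{2\mu}\norm{z-y}_2^2\}$; negating yields \eqref{eq:g_moreau} with the maximum attained.

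With \eqref{eq:g_moreau} in hand I would apply Lemma~\ref{lem:prox} to $g:=-g_0$. First one verifies that $-g_0$ is proper, closed, and convex: convexity (concavity of $g_0$) and upper semicontinuity of $g_0$ come from Lemma~\ref{lem:L-cvx} and \eqref{eq:lagrangian_0}, and properness follows from Assumption~\ref{asp:KKT}, which supplies $y^*\in\cK^*$ with $g_0(y^*)=p^*$ finite, together with the weak-duality bound $g_0(y)\le p^*$ for every $y$ (so $-g_0$ never takes the value $-\infty$). Then $-g_\mu$ is exactly the Moreau regularization of $-g_0$, and Lemma~\ref{lem:prox} immediately delivers that $g_\mu$ is everywhere finite, concave, and continuously differentiable, that $\grad g_\mu(y)=-\tfrac{1}{\mu}(y-\pi_\mu(y))$ is Lipschitz with constant $\tfrac{1}{\mu}$ (this is \eqref{eq:grad_gk}), and that the maximizer $\pi_\mu(y)$ in \eqref{eq:g_moreau} is unique. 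Finally $\pi_\mu(y)\in\cK^*$ because it lies in the domain of $-g_0$, which is contained in $\cK^*$ by \eqref{eq:lagrangian_0}. For the KKT claim, weak duality gives $g_0(z)\le p^*$ for all $z$, hence $g_\mu(y)\le p^*$ for all $y$ by \eqref{eq:g_moreau}; taking $z=y^*$ in \eqref{eq:g_moreau} gives $g_\mu(y^*)\ge g_0(y^*)=p^*$, so $g_\mu(y^*)=p^*=\max_y g_\mu(y)$.

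The main obstacle is \eqref{eq:g_moreau}, and within it the exactness of the conjugate-of-a-sum/infimal-convolution interchange. This is where the regularity bookkeeping matters: one must check that $h$ is proper (bounded below because $p$ is closed and $\chi$ is compact with $\chi\subset\dom\rho$, and not identically $+\infty$ since any $x\in\chi$ is feasible for $u=Ax-b$) and use that $q$ is finite-valued and continuous, which is precisely what makes the interchange valid and the maximum attained. Everything downstream of \eqref{eq:g_moreau} is a mechanical invocation of Lemma~\ref{lem:prox} and weak duality.
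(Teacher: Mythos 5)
Your proposal is correct and follows essentially the same route as the paper: both derive \eqref{eq:g_moreau} by writing $g_\mu = -(h_0+\mu\omega)^*$ for the perturbation function $h_0$ and applying the exact conjugate-of-a-sum/infimal-convolution identity (Rockafellar's Theorem~16.4, valid since the quadratic is finite everywhere), then read off all regularity claims from Lemma~\ref{lem:prox}. The only (immaterial) difference is the KKT-maximizer step, where you argue via weak duality ($g_\mu\le p^*$ everywhere and $g_\mu(y^*)\ge g_0(y^*)=p^*$) while the paper notes $\pi_\mu(y^*)=y^*$, hence $\grad g_\mu(y^*)=0$, and invokes concavity.
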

\begin{proof}
Fix $\mu \geq 0$, define 
\begin{equation}
\label{eq:h_mu}
h_\mu(u):=\inf_{x\in\chi}F_\mu(x,u).
\end{equation}
Note that $F_{\mu}(x,u) =  p(x)+\frac{\mu}{2}\norm{u}_2^2 + \ones_{\cK}(Ax-b-u)$, where $\ones_{\cK}(\cdot)$ denotes the indicator function of the set $\cK$; therefore, $F_{\mu}(x,u)$ is convex in $(x,u)$. Since $F_\mu$ is convex in $(x,u)$, $\chi$ is a convex set and $h_{\mu}(0) = \inf_{x \in \chi} \{ p(x)+
\ones_{\cK}(Ax-b)\} = p^\ast > -\infty$, it follows that $h_{\mu}$ is a convex
function such that $h_\mu(\cdot)>-\infty$ \cite{Boyd04_1B}. From the definition of $F_{\mu}$, it
follows that for all $u\in\reals^m$,
\begin{equation*}
h_\mu(u)= h_0(u)+ \mu~\omega(u),
\end{equation*}
where $\omega(u) := \frac{1}{2}\norm{u}_2^2$. Substituting \eqref{eq:LmuF}
in \eqref{eq:g_mu}, for all $\mu \geq 0$, we get
\begin{equation*}
g_\mu(y)=\inf_{u\in\reals^m} \left\{h_\mu(u)-\fprod{y,u} \right\}=-h_\mu^*(y),
\end{equation*}
where $h_\mu^*$ denotes the conjugate of the convex function $h_\mu$.

Fix $\mu>0$, since $h_\mu$ is a sum of two convex functions, it follows from Theorem~16.4 in \cite{Rockafellar-book-70} that
\begin{equation}
\label{eq:conjugate_sum}
g_\mu(y)=-(h_0 + \mu \omega)^*(y)
 =-\min_{z\in\reals^m} \left\{h^*_0(z)+\mu~\omega^\ast\left(\frac{y-z}{\mu}\right)\right\}.
\end{equation}
Since $h^*_0=-g_0$ and $\omega^*=\omega$, the  result
\eqref{eq:g_moreau} immediately follows from \eqref{eq:conjugate_sum}.

Note that \eqref{eq:g_moreau} shows that $-g_\mu$ is the Moreau regularization of $-g_0$. Therefore, Lemma~\ref{lem:prox} and \eqref{eq:g_moreau} imply that $g_\mu$ is everywhere finite, differentiable concave function such that $\grad g_\mu$ is given in \eqref{eq:grad_gk}.

Let $y^*$ be a KKT point of \eqref{eq:conic_problem}. Note that $\pi_\mu(y^*)=y^*$. Hence $\grad g_\mu(y^*)=\mathbf{0}$. Concavity of $g_\mu$ implies that $y^*\in\argmax g_\mu(y)$ for any KKT point $y^*$.
\end{proof}
\begin{theorem}
\label{thm:grad_diff}
Fix $\mu>0$ and $\bar{y}\in\reals^m$. Suppose $\bar{x}\in\chi$ is an
$\xi$-optimal solution to $\min_{x\in\chi}L_{\mu}(x,\bar{y})$, i.e.
$
L_{\mu}(\bar{x},\bar{y}) \leq \min\{L_{\mu}(x,\bar{y}):\ x\in\chi\}
+ \xi = g_{\mu}(\bar{y}) + \xi.
$
Then
\begin{equation}
\label{eq:grad_diff}
\mu~\norm{\grad_y \cL_\mu(\bar{x},\bar{y})-\grad g_\mu(\bar{y})}_2^2\leq 2\xi.
\end{equation}
\end{theorem}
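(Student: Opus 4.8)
The plan is to recognize the claim as a statement about two concave functions of $y$ that nearly touch at $\bar{y}$, and to exploit their common $\tfrac{1}{\mu}$-Lipschitz gradient bound. Set $G(y):=\cL_\mu(\bar{x},y)$. First I would compute $\grad_y\cL_\mu(\bar{x},\bar{y})$ explicitly by differentiating the closed form \eqref{eq:L_mu}: using the standard fact that $\grad_w\big(\tfrac12 d_\cK(w)^2\big)=w-\Pi_\cK(w)$ together with the chain rule (the inner argument $w=A\bar{x}-b-y/\mu$ is affine in $y$), one gets $\grad G(y)=\Pi_\cK(A\bar{x}-b-y/\mu)-(A\bar{x}-b)$. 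Because $\Pi_\cK$ is nonexpansive, $\grad G$ is Lipschitz with constant $\tfrac{1}{\mu}$; and $G$ is concave in $y$ by Lemma~\ref{lem:L-cvx}. Thus $G$ is a concave function with $\tfrac{1}{\mu}$-Lipschitz gradient, and by Theorem~\ref{thm:moreau} so is $g_\mu$.

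The key structural observation is the domination $g_\mu(y)=\inf_{x\in\chi}\cL_\mu(x,y)\leq\cL_\mu(\bar{x},y)=G(y)$ for every $y$, while the $\xi$-optimality hypothesis says the two functions nearly agree at $\bar{y}$: $G(\bar{y})-g_\mu(\bar{y})=\cL_\mu(\bar{x},\bar{y})-g_\mu(\bar{y})\leq\xi$. I would then combine two one-sided estimates at $\bar{y}$. Concavity of $G$ gives the upper bound $G(y)\leq G(\bar{y})+\fprod{\grad G(\bar{y}),\ y-\bar{y}}$, and the descent lemma applied to $-g_\mu$ (convex with $\tfrac1\mu$-Lipschitz gradient) gives, after negating, the lower bound $g_\mu(y)\geq g_\mu(\bar{y})+\fprod{\grad g_\mu(\bar{y}),\ y-\bar{y}}-\tfrac{1}{2\mu}\norm{y-\bar{y}}_2^2$. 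Chaining these through $g_\mu\leq G$ and $G(\bar{y})-g_\mu(\bar{y})\leq\xi$ yields, for all $y$,
\begin{equation*}
\fprod{\grad g_\mu(\bar{y})-\grad G(\bar{y}),\ y-\bar{y}}-\frac{1}{2\mu}\norm{y-\bar{y}}_2^2\leq\xi.
\end{equation*}

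Finally I would extract the gradient gap by choosing the test point in the worst-case direction: setting $y=\bar{y}+t\big(\grad g_\mu(\bar{y})-\grad G(\bar{y})\big)$ reduces the displayed inequality to $t\,\delta^2-\tfrac{t^2}{2\mu}\delta^2\leq\xi$ with $\delta:=\norm{\grad g_\mu(\bar{y})-\grad G(\bar{y})}_2$, valid for every scalar $t$; taking $t=\mu$ (the maximizer of the left side) gives $\tfrac{\mu}{2}\delta^2\leq\xi$, which is exactly \eqref{eq:grad_diff} once one notes $\grad G(\bar{y})=\grad_y\cL_\mu(\bar{x},\bar{y})$. The main obstacle is really just the first step---getting the gradient of the squared-distance penalty right and confirming its $\tfrac1\mu$-Lipschitz continuity---after which the two-function sandwich argument and the one-dimensional optimization are routine. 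A notable feature is that strong concavity of $\cL_\mu(\bar{x},\cdot)$ in $y$ is never needed; only the upper Lipschitz bound on $\grad g_\mu$ (the $-\tfrac{1}{2\mu}\norm{\cdot}_2^2$ term) together with plain concavity of $G$ drives the estimate.
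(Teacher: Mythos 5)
Your proposal is correct and follows essentially the same route as the paper's proof: it combines the concavity upper bound on $\cL_\mu(\bar{x},\cdot)$ at $\bar{y}$, the $\tfrac{1}{\mu}$-Lipschitz-gradient lower bound on $g_\mu$, the domination $g_\mu\leq\cL_\mu(\bar{x},\cdot)$ together with $\xi$-optimality, and then maximizes the resulting quadratic in $y$ to extract $\tfrac{\mu}{2}\norm{\grad_y\cL_\mu(\bar{x},\bar{y})-\grad g_\mu(\bar{y})}_2^2\leq\xi$. The explicit computation of $\grad_y\cL_\mu(\bar{x},\cdot)$ is a harmless extra step not needed in the paper's version, which only uses concavity and differentiability.
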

\begin{proof}
For $\mu>0$, $g_\mu$ is concave and $\grad g_\mu$ is Lipschitz continuous with Lipschitz constant equal to $\frac{1}{\mu}$; therefore,
\begin{equation}
\label{eq:lipschitz-g}
g_\mu(y)\geq g_\mu(\bar{y})+\fprod{\grad g_\mu(\bar{y}),
  y-\bar{y}}-\frac{1}{2\mu}\norm{y-\bar{y}}_2^2,
\end{equation}
for all $y\in\reals^m$. Moreover, since for every $x\in\chi$,
$\cL_\mu(x,y)$ is concave in $y$,  it follows that for all $y\in\reals^m$
\begin{equation}
\label{eq:lowerboundL}
\cL_\mu(\bar{x}, \bar{y})+\fprod{\grad_y
  \cL_\mu(\bar{x},\bar{y}),~y-\bar{y}}\geq \cL_\mu(\bar{x},y)\geq
g_\mu(y).
\end{equation}
Combining \eqref{eq:lipschitz-g}, \eqref{eq:lowerboundL} and the fact
that $\bar{x}$ is $\xi$-optimal and $y$ is arbitrary, we get
\eq
\xi\geq
\sup_{y\in\reals^m}\left\{\fprod{\grad
  g_\mu(\bar{y})-\grad_y \cL_\mu(\bar{x},\bar{y}),~
  y-\bar{y}}-\frac{1}{2\mu}~\norm{y-\bar{y}}_2^2\right\} =
\frac{\mu}{2} \norm{\grad
  g_\mu(\bar{y})-\grad_y \cL_\mu(\bar{x},\bar{y})}_2^2.
\en
\end{proof}
\section{ALCC Algorithm}
\label{sec:alcc}
In order to solve $(P)$ given in \eqref{eq:conic_problem}, we
inexactly solve the sequence of  sub-problems: 
\begin{equation}
\label{eq:subproblem}
(SP_k):\ \min_{x\in\chi} P_k(x,y_k),
\end{equation}
where
\eq
P_k(x,y) :=\frac{1}{\mu_k}~\cL_{\mu_k}(x,y) = \frac{1}{\mu_k}~p(x) +
\frac{1}{2}~d_\cK\left(Ax-b-\frac{y}{\mu_k}\right)^2.
\en
For notational convenience, we define
\eq
f_k(x,y) :=\frac{1}{2}~d_\cK\left(Ax-b-\frac{y}{\mu_k}\right)^2.
\en
Therefore, $P_k(x,y) = \frac{1}{\mu_k}~p(x) + f_k(x,y)$.
The specific choice of penalty parameter and Lagrangian dual sequences, $\{\mu_k\}$ and $\{y_k\}$, are discussed later in this section.
\begin{lemma}
\label{lem:lipschitz}
For all $k\geq 1$ and $y\in\reals^m$, $f_k(x,y)$ is convex in $x$. Moreover,
\begin{equation}
\label{eq:gradfk}
\grad_x f_k(x,y)=A^T\left(Ax-b-\frac{y}{\mu_k}-\Pi_\cK\left(Ax-b-\frac{y}{\mu_k}\right)\right),
\end{equation}
and $\grad_x f_k(x,y)$ is Lipschitz continuous in $x$ with constant $L=\sigma^2_{\max}(A)$.
\end{lemma}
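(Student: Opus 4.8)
The plan is to recognize $f_k(\cdot,y)$ as the composition of a fixed, $y$-independent smooth convex function with an affine map, and then invoke Lemma~\ref{lem:prox}. Fix $k\geq 1$ and $y\in\reals^m$, and write $z(x):=Ax-b-\frac{y}{\mu_k}$, so that $z$ is affine in $x$ and $f_k(x,y)=\phi(z(x))$ where $\phi(z):=\frac12 d_\cK(z)^2$. The key observation is that $\phi$ is precisely the Moreau regularization of the indicator function $\mathbf{1}_\cK$ of the cone $\cK$: taking $g=\mathbf{1}_\cK$ and $\mu=1$ in Lemma~\ref{lem:prox}, the proximal map is $\pi_1(z)=\argmin\{\frac12\norm{w-z}_2^2:\ w\in\cK\}=\Pi_\cK(z)$ and the regularization is $\psi_1(z)=\min\{\frac12\norm{w-z}_2^2:\ w\in\cK\}=\frac12 d_\cK(z)^2=\phi(z)$.

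With this identification, Lemma~\ref{lem:prox} does essentially all the work. It gives that $\phi$ is everywhere finite, convex, and differentiable with $\grad\phi(z)=z-\pi_1(z)=z-\Pi_\cK(z)$. Since $z(\cdot)$ is affine and $\phi$ is convex, $f_k(\cdot,y)=\phi\circ z$ is convex in $x$; and since $\phi$ is differentiable and $z$ is affine with Jacobian $A$, the chain rule yields
\begin{equation*}
\grad_x f_k(x,y)=A^T\,\grad\phi(z(x))=A^T\Big(Ax-b-\tfrac{y}{\mu_k}-\Pi_\cK\big(Ax-b-\tfrac{y}{\mu_k}\big)\Big),
\end{equation*}
which is exactly \eqref{eq:gradfk}.

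For the Lipschitz estimate, the plan is to chain three bounds. First, Lemma~\ref{lem:prox} (again with $g=\mathbf{1}_\cK$, $\mu=1$) also controls $\pi^c_1=I-\Pi_\cK=\grad\phi$: dropping the nonnegative term $\norm{\pi_1(z_1)-\pi_1(z_2)}_2^2$ in \eqref{eq:prox} gives $\norm{\grad\phi(z_1)-\grad\phi(z_2)}_2\leq\norm{z_1-z_2}_2$, i.e.\ $\grad\phi$ is nonexpansive. Second, $z(x_1)-z(x_2)=A(x_1-x_2)$, so $\norm{z(x_1)-z(x_2)}_2\leq\sigma_{\max}(A)\norm{x_1-x_2}_2$. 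Third, $\norm{A^Tv}_2\leq\sigma_{\max}(A)\norm{v}_2$ for every $v\in\reals^m$. Composing these three estimates in order gives $\norm{\grad_x f_k(x_1,y)-\grad_x f_k(x_2,y)}_2\leq\sigma_{\max}^2(A)\,\norm{x_1-x_2}_2$, which is the claimed Lipschitz constant $L=\sigma_{\max}^2(A)$.

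I do not anticipate a genuine obstacle, since Lemma~\ref{lem:prox} already supplies the differentiability, the gradient formula, and the firm-nonexpansiveness inequality. The only point requiring care is the bookkeeping of the identification $\phi=\psi_1$ with $g=\mathbf{1}_\cK$ --- in particular noting that the proximal map of an indicator is exactly the Euclidean projection, so that $\pi_1=\Pi_\cK$ and $\pi^c_1=\grad\phi$ --- after which convexity, the gradient formula, and the Lipschitz bound all follow from the chain rule and the three elementary norm inequalities above.
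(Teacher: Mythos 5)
Your proof is correct and follows essentially the same route as the paper: the paper's proof invokes its Lemma~\ref{lem:dist} (which identifies $\tfrac12 d_\cK^2$ as the Moreau regularization of $\mathbf{1}_\cK$ and derives convexity, the gradient formula, and nonexpansiveness of $\Pi^c_\cK$ from Lemma~\ref{lem:prox}), and then applies the chain rule and the same two $\sigma_{\max}(A)$ bounds; you have simply inlined the content of Lemma~\ref{lem:dist} rather than citing it.
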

\begin{proof}
See appendix for the proof.
\end{proof}
\begin{figure}[t]
    \rule[0in]{6.5in}{1pt}\\
    \textbf{Algorithm ALCC}~$(x_0,~\{\alpha_k,~\eta_k,~\mu_k\})$\\
    \rule[0.125in]{6.5in}{0.1mm}
    \vspace{-0.25in}
    {\small
    \begin{algorithmic}[1]
    \STATE $y_1\gets \mathbf{0}$, $k\gets 1$
    \WHILE{$k\geq 1$}
    \STATE $x_k\gets\oracle(P_k, y_k, \alpha_k, \eta_k, \mu_k)$ /*~\emph{See Section~\ref{sec:oracle} for \oracle} ~*/
    \STATE $y_{k+1}\gets\mu_k\left[\Pi_\cK\left(Ax_k-b-\frac{y_k}{\mu_k}\right)-\left(Ax_k-b-\frac{y_k}{\mu_k}\right)\right]$ \label{algeq:y_update}
    \STATE $k\gets k+1$
    \ENDWHILE
    \end{algorithmic}
    \rule[0.25in]{6.5in}{0.1mm}
    }
    \vspace{-0.5in}
    \caption{Augmented Lagrangian Algorithm for Conic Convex Programming} \label{alg:alcc}
    \vspace{-0.25in}
\end{figure}

The \alcc\ is displayed in Figure~\ref{alg:alcc}. The inputs to ALCC are
an initial point $x_0\in\chi$
and a parameter sequence
$\{\alpha_k,~\eta_k,~\mu_k\}$ such that
\begin{equation}
  \label{eq:params}
  \alpha_k\searrow 0, \quad \eta_k\searrow 0, \quad 0<\mu_k\nearrow \infty.
\end{equation}
\subsection{Oracle}
\label{sec:oracle}
The subroutine $\oracle (P, \bar{y}, \alpha, \eta, \mu)$ returns
$\bar{x} \in \chi$ such that $\bar{x}$ satisfies one of the following
two conditions:
\begin{align}
&0\leq P(\bar{x},\bar{y})-\inf_{x\in\chi}P(x,\bar{y})\leq \frac{\alpha}{\mu}, \label{eq:eps_opt}\\
&\exists q\in\partial_x P(\bar{x},\bar{y})+\partial_x \mathbf{1}_\chi(\bar{x})~  \mbox{ s.t. }
  \norm{q}_2\leq\frac{\eta}{\mu}, \label{eq:grad_opt}
\end{align}
where $\mathbf{1}_\chi(\cdot)$ denotes the indicator function of the set $\chi$.

Let $\bar{\rho}_k(x):=\frac{1}{\mu_k}~\rho(x)$ and
$\bar{\gamma}_k(x):=\frac{1}{\mu_k}~\gamma(x)+f_k(x,y_k)$. Then
$\grad \bar{\gamma}_k$ exists and is Lipschitz continuous with
Lipschitz constant
\begin{equation}
\label{eq:lipschitz-constant}
L_{\bar{\gamma}_k}:= \frac{1}{\mu_k}~L_\gamma+\sigma^2_{\max}(A).
\end{equation}
Let
\begin{equation}
\label{eq:chi-opt-k}
\chi\supset\chi_k^*:=\argmin_{x\in \chi}P_k(x,y_k)
\end{equation}
denote the set of optimal solutions to $(SP_k)$. Then, Lemma~\ref{lem:tseng_corollary}
guarantees that the \apg~with the initial iterate $x_{k-1}\in\chi$
requires at most
\begin{equation}
\label{eq:lmax}
\ell_{\max}(k):=\sqrt{\frac{2\mu_k L_{\bar{\gamma}_k}}{\alpha_k}}~d_{\chi_k^*}(x_{k-1})
\end{equation}
iterations to compute
$\frac{\alpha_k}{\mu_k}$-optimal solution to the $k$-th subproblem
$(SP_k)$ in \eqref{eq:subproblem}. Thus, setting the stopping
criterion $\textsc{stop} = \{l
\geq \ell_{\max}(k)\}$ ensures that the output of the \apg\ satisfies
\eqref{eq:eps_opt}. Thus, we have
shown that  
there exists a subroutine 
$\oracle(P_k, y_k, \alpha_k, \eta_k, \mu_k)$ that can compute
$x_k$ satisfying either \eqref{eq:eps_opt} or \eqref{eq:grad_opt}. 
As indicated earlier,
the computational complexity of  each
iteration in the \apg\ is 
dominated by the complexity of computing the solution to~\eqref{eq:nonsmooth-operation}.
\subsection{Convergence properties of \alcc}
In this section we investigate the convergence rate of \alcc.
\begin{lemma}
\label{lem:cone}
Let $\cK\subset\reals^n$ denote a closed, convex cone and
$\bar{x}\in\reals^n$. Then $\bar{x}-\Pi_\cK(\bar{x})\in-\cK^*$ and
$\fprod{\bar{x}-\Pi_\cK(\bar{x}),~\Pi_\cK(\bar{x})}=0$, where
$\cK^*=\{s\in\reals^n:\ \fprod{s, x}\geq 0\; \forall
x\in\cK\}$. Finally, if $x\in-\cK^*$, then $\Pi_\cK(x)=\mathbf{0}$.
\end{lemma}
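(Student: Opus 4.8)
This is the standard characterization of the Euclidean projection onto a closed convex cone, so my plan is to derive all three assertions from the variational inequality for $\Pi_\cK$ together with the positive homogeneity of the cone. First I would recall that since $\cK$ is closed and convex, $\Pi_\cK(\bar x)$ is well defined (as noted in the Notation subsection) and is characterized by the optimality condition
\begin{equation*}
\fprod{\bar x-\Pi_\cK(\bar x),~x-\Pi_\cK(\bar x)}\leq 0\qquad\text{for all }x\in\cK.
\end{equation*}
Write $z:=\Pi_\cK(\bar x)$ and $r:=\bar x-z$ for brevity.

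To get $\fprod{r,z}=0$ I would exploit that $\cK$ is a cone, so both $0\in\cK$ and $2z\in\cK$ are feasible. Plugging $x=0$ into the variational inequality gives $\fprod{r,-z}\leq 0$, i.e. $\fprod{r,z}\geq 0$; plugging $x=2z$ gives $\fprod{r,z}\leq 0$. Together these force $\fprod{r,z}=0$, which is the second claim. For the first claim, $r\in-\cK^*$, I substitute this orthogonality back into the variational inequality: for every $x\in\cK$ we have $\fprod{r,x}\leq\fprod{r,z}=0$, so $\fprod{-r,x}\geq 0$ for all $x\in\cK$, which is exactly the statement that $-r\in\cK^*$, i.e. $\bar x-\Pi_\cK(\bar x)=r\in-\cK^*$.

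For the final assertion, suppose $x\in-\cK^*$, equivalently $-x\in\cK^*$, so $\fprod{-x,u}\geq 0$ for all $u\in\cK$. I claim $z:=\mathbf{0}$ satisfies the projection optimality condition at the point $x$: I must check $\fprod{x-0,~u-0}\leq 0$ for all $u\in\cK$, which reads $\fprod{x,u}\leq 0$, and this is precisely $\fprod{-x,u}\geq 0$, true by assumption. Since $\mathbf{0}\in\cK$ and the optimality condition characterizes the unique projection, $\Pi_\cK(x)=\mathbf{0}$.

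I do not expect any genuine obstacle here; the only point requiring a little care is to state and use the correct variational characterization of projection onto a convex set, and then to choose the two specific feasible test points ($0$ and $2z$) that the cone structure makes available. An alternative to the whole argument would be to invoke the Moreau decomposition $\bar x=\Pi_\cK(\bar x)+\Pi_{-\cK^*}(\bar x)$ with the two summands orthogonal, but since the paper has not introduced Moreau decomposition, I would keep the self-contained projection-inequality argument above.
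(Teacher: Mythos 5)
Your proposal is correct and follows essentially the same route as the paper: both arguments start from the variational characterization of $\Pi_\cK$ and exploit the cone's positive homogeneity, the only cosmetic difference being that you establish the orthogonality $\fprod{r,z}=0$ first (via the test points $0$ and $2z$) and then deduce $r\in-\cK^*$, whereas the paper first deduces $\Pi_\cK(\bar x)-\bar x\in\cK^*$ from boundedness of the linear functional over the cone and then extracts the orthogonality. Your verification that $\mathbf{0}$ satisfies the optimality condition for $x\in-\cK^*$ matches the paper's argument (and correctly uses $-\cK^*$ where the paper's proof has an apparent typo writing $-\cK$).
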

\begin{proof}
See appendix for the proof.
\end{proof}

From Lemma~\ref{lem:cone}, it follows that the dual variable $y_{k+1}$ computed in
Line~\ref{algeq:y_update} of \alcc~ satisfies $y_{k+1}\in\cK^*$. Also note
that  for all $k\geq 1$, 
\begin{equation}
\label{eq:ykp}
y_{k+1}=y_k+\mu_k\grad_y \cL_{\mu_k}(x_k, y_k).
\end{equation}

Next, we establish that the sequence of dual variables  $\{y_k\}$ generated by \alcc~is bounded for an appropriately chosen parameter
sequence.
\begin{lemma}
\label{lem:inexact-opt}
Let $\{x_k,y_k\}\in\chi\times\cK^*$ be the sequence of primal-dual
ALCC iterates for a given input parameter sequence
$\{\alpha_k,~\eta_k,~\mu_k\}$ satisfying \eqref{eq:params}. Then, for all
$k\geq 1$, 
\begin{equation}
0\leq \cL_{\mu_k}(x_k,y_k)-g_{\mu_k}(y_k)\leq \xik, \label{eq:eps_opt_L}
\end{equation}
where
\begin{equation}
\label{eq:xik}
\xik  = \max\{\alpha_k,~\eta_k~d_{\chi_k^*}(x_{k})\},
\end{equation}
and $\chi_k^*\subset\chi$ is defined in \eqref{eq:chi-opt-k}.
\end{lemma}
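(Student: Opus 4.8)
The plan is to obtain the lower bound directly and to prove the upper bound by a case analysis on which of the two \oracle\ stopping conditions, \eqref{eq:eps_opt} or \eqref{eq:grad_opt}, the iterate $x_k$ satisfies. The lower bound $0\leq\cL_{\mu_k}(x_k,y_k)-g_{\mu_k}(y_k)$ is immediate: since $x_k\in\chi$ and $g_{\mu_k}(y_k)=\inf_{x\in\chi}\cL_{\mu_k}(x,y_k)$ by \eqref{eq:g_mu}, the iterate value cannot fall below the infimum. The key structural observation for the upper bound is that $P_k(x,y_k)=\frac{1}{\mu_k}\cL_{\mu_k}(x,y_k)$ with $\mu_k>0$, so $\chi_k^*$ is exactly the minimizer set of $\cL_{\mu_k}(\cdot,y_k)$ over $\chi$ and $\mu_k\inf_{x\in\chi}P_k(x,y_k)=g_{\mu_k}(y_k)$. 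Because $\chi$ is compact and $\cL_{\mu_k}(\cdot,y_k)$ is proper and lower semicontinuous (as $\rho$ is closed and finite on $\chi\subset\dom(\rho)$), $\chi_k^*$ is nonempty and closed, so $d_{\chi_k^*}(x_k)$ and the projection $\Pi_{\chi_k^*}(x_k)$ are well defined.

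If $x_k$ satisfies the function-value condition \eqref{eq:eps_opt}, I would simply multiply $P_k(x_k,y_k)-\inf_{x\in\chi}P_k(x,y_k)\leq\frac{\alpha_k}{\mu_k}$ through by $\mu_k$ and apply the identity above to conclude $\cL_{\mu_k}(x_k,y_k)-g_{\mu_k}(y_k)\leq\alpha_k\leq\xi_k$. If instead $x_k$ satisfies the subgradient condition \eqref{eq:grad_opt}, I would work with the convex extended-valued function $\Phi_k(\cdot):=P_k(\cdot,y_k)+\mathbf{1}_\chi(\cdot)$, whose convexity follows from that of $p$, of $f_k$ (Lemma~\ref{lem:lipschitz}), and of $\mathbf{1}_\chi$. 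Condition \eqref{eq:grad_opt} furnishes $q\in\partial_x P_k(x_k,y_k)+\partial_x\mathbf{1}_\chi(x_k)\subseteq\partial\Phi_k(x_k)$ with $\norm{q}_2\leq\frac{\eta_k}{\mu_k}$, where the inclusion is the elementary Moreau--Rockafellar direction and needs no constraint qualification. Taking $x_k^*:=\Pi_{\chi_k^*}(x_k)$, the subgradient inequality $\Phi_k(x_k^*)\geq\Phi_k(x_k)+\fprod{q,~x_k^*-x_k}$ combined with Cauchy--Schwarz gives $P_k(x_k,y_k)-\inf_{x\in\chi}P_k(x,y_k)\leq\norm{q}_2\,\norm{x_k-x_k^*}_2\leq\frac{\eta_k}{\mu_k}\,d_{\chi_k^*}(x_k)$. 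Scaling by $\mu_k$ yields $\cL_{\mu_k}(x_k,y_k)-g_{\mu_k}(y_k)\leq\eta_k\,d_{\chi_k^*}(x_k)\leq\xi_k$, and the two cases together produce the claimed bound $\xi_k=\max\{\alpha_k,~\eta_k\,d_{\chi_k^*}(x_k)\}$.

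The only delicate step is the second case: converting a first-order (sub)gradient stopping criterion into a bound on the function-value suboptimality. This is precisely where the distance $d_{\chi_k^*}(x_k)$ enters the estimate — it is the price paid for certifying optimality through a small subgradient rather than a small function gap — and it is the reason the well-definedness of the projection onto the nonempty, closed minimizer set $\chi_k^*$ must be secured first. The remaining manipulations are routine rescalings by the strictly positive parameter $\mu_k$.
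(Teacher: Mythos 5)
Your proposal is correct and follows essentially the same route as the paper's proof: the trivial lower bound from $g_{\mu_k}(y_k)=\inf_{x\in\chi}\cL_{\mu_k}(x,y_k)$, case~1 via direct rescaling of \eqref{eq:eps_opt}, and case~2 via the subgradient inequality for $P_k(\cdot,y_k)+\mathbf{1}_\chi(\cdot)$ evaluated at the projection onto $\chi_k^*$ together with Cauchy--Schwarz. The only difference is that you explicitly verify nonemptiness and closedness of $\chi_k^*$, which the paper leaves implicit.
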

\begin{proof}
Fix $k\geq 1$. 
Suppose $x_k =\oracle(P_k, y_k, \alpha_k, \eta_k, \mu_k)$ satisfies
\eqref{eq:eps_opt}. Then we
have
\begin{equation}
\label{eq:inexact-subopt}
P_k(x_k, y_k)\leq \inf_{x\in\chi}P_k(x, y_k)+ \frac{\alpha_k}{\mu_k}=
\frac{g_{\mu_k}(y_k)+\alpha_k}{\mu_k}.
\end{equation}
Suppose instead that $x_k = \oracle(P_k, y_k, \alpha_k, \eta_k, \mu_k)$
satisfies \eqref{eq:grad_opt}. Then, there exists
$q_k\in\partial_x P_k(x_k,y_k)+\partial \mathbf{1}_{\chi}(x_k)$ such
that $\norm{q_k}_2\leq\frac{\eta_k}{\mu_k}$. Since
$P_k(x,y_k)+\mathbf{1}_{\chi}(x)$ is convex in $x$, it follows that
\begin{equation}
\label{eq:inexact-grad}
P_k(x_k,y_k)\leq \inf_{\bar{x}\in\chi_k^*}P_k(\bar{x},y_k)+\fprod{q_k, x_k-\bar{x}}\leq
\frac{g_{\mu_k}(y_k)+\eta_k~d_{\chi_k^*}(x_{k})}{\mu_k}.
\end{equation}
Since $P_k(x,y)=\frac{1}{\mu_k}\cL_{\mu_k}(x,y)$, the desired result
follows from \eqref{eq:inexact-subopt} and \eqref{eq:inexact-grad}.
\end{proof}\\
The following result was originally established in \cite{Roc73_1J} for $\cK = \reals^m_+$. We state and prove the
  extension to general convex cones for completeness.
\begin{theorem}
\label{thm:bounded-y}
Suppose $B:=\sum_{k=1}^{\infty}\sqrt{2~\xik\mu_k}<\infty$,
where $\xik$ is defined in \eqref{eq:xik}. Then, for all $k \geq 1$,  $\norm{y_k}_2\leq
B+\norm{y^*}_2$ where $y^*$ is  any  KKT
point of $(P) $. 
\end{theorem}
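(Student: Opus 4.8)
The plan is to measure progress by the Euclidean distance $\norm{y_k-y^*}_2$ to a fixed KKT point $y^*$ and to show that this distance can grow by at most $\sqrt{2\xik\mu_k}$ per iteration; summability of $\{\sqrt{2\xik\mu_k}\}$ then caps the cumulative growth. The conceptual backbone is that the \emph{exact} dual update is a proximal (Moreau) step applied to $-g_0$, a map for which $y^*$ is a fixed point and which is nonexpansive.

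First I would identify the exact update. Using \eqref{eq:grad_gk}, the map $y\mapsto y+\mu_k\grad g_{\mu_k}(y)=y-(y-\pi_{\mu_k}(y))=\pi_{\mu_k}(y)$ is exactly the maximizer $\pi_{\mu_k}$ of the Moreau representation \eqref{eq:g_moreau}, i.e. the proximal map of $-g_0$. Since $y^*$ maximizes $g_0$ by Assumption~\ref{asp:KKT}, Theorem~\ref{thm:moreau} gives $\pi_{\mu_k}(y^*)=y^*$, so $y^*$ is a fixed point of this map. Applying the nonexpansiveness \eqref{eq:prox} of Lemma~\ref{lem:prox} to $g=-g_0$ then yields $\norm{\pi_{\mu_k}(y_k)-y^*}_2=\norm{\pi_{\mu_k}(y_k)-\pi_{\mu_k}(y^*)}_2\leq\norm{y_k-y^*}_2$.

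Next I would bound the inexactness of the actual update \eqref{eq:ykp}, $y_{k+1}=y_k+\mu_k\grad_y\cL_{\mu_k}(x_k,y_k)$, relative to the exact step $\pi_{\mu_k}(y_k)=y_k+\mu_k\grad g_{\mu_k}(y_k)$. Subtracting gives $\norm{y_{k+1}-\pi_{\mu_k}(y_k)}_2=\mu_k\norm{\grad_y\cL_{\mu_k}(x_k,y_k)-\grad g_{\mu_k}(y_k)}_2$. By Lemma~\ref{lem:inexact-opt}, $x_k$ is $\xik$-optimal for $\min_{x\in\chi}\cL_{\mu_k}(x,y_k)$, so Theorem~\ref{thm:grad_diff} with $\xi=\xik$ gives $\mu_k\norm{\grad_y\cL_{\mu_k}(x_k,y_k)-\grad g_{\mu_k}(y_k)}_2^2\leq 2\xik$, whence $\norm{y_{k+1}-\pi_{\mu_k}(y_k)}_2\leq\sqrt{2\xik\mu_k}$.

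Finally, the triangle inequality combines the two estimates into the one-step recursion $\norm{y_{k+1}-y^*}_2\leq\norm{y_k-y^*}_2+\sqrt{2\xik\mu_k}$. Telescoping from the initialization $y_1=\mathbf{0}$ and invoking $B=\sum_{k\geq1}\sqrt{2\xik\mu_k}<\infty$ yields $\norm{y_k-y^*}_2\leq\norm{y^*}_2+B$ for every $k$, which is the claimed uniform bound on $\{y_k\}$. I expect the only real obstacle to be the first step: correctly recognizing the exact dual update as the proximal map of $-g_0$ and exploiting the firm nonexpansiveness around the fixed point $y^*$. Once this identification is made, the inexactness estimate from Theorem~\ref{thm:grad_diff} and the telescoping are routine.
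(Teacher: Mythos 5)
Your proposal is correct and follows essentially the same route as the paper's proof: identify the exact dual update with the proximal map $\pi_{\mu_k}$ via \eqref{eq:grad_gk}, bound the deviation $\norm{y_{k+1}-\pi_{\mu_k}(y_k)}_2\leq\sqrt{2\xik\mu_k}$ using Lemma~\ref{lem:inexact-opt} and Theorem~\ref{thm:grad_diff}, use $\pi_{\mu_k}(y^*)=y^*$ together with nonexpansiveness, and telescope the resulting recursion from $y_1=\mathbf{0}$. This matches the paper's argument step for step (including the final accounting that converts the bound on $\norm{y_k-y^*}_2$ into the stated bound on $\norm{y_k}_2$), so there is nothing to add.
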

\begin{proof}
Lemma~\ref{lem:inexact-opt} and
Theorem~\ref{thm:grad_diff} imply that $\sqrt{2~\xik\mu_k} \geq\norm{\mu_k\grad_y\cL_{\mu_k}(x_k,y_k)-\mu_k\grad g_{\mu_k}(y_k)}_2.$
Next, adding and subtracting $y_k$, and using \eqref{eq:grad_gk} and
\eqref{eq:ykp}, we get
\begin{equation}
\sqrt{2~\xik\mu_k}  \geq  \norm{\mu_k\grad_y\cL_{\mu_k}(x_k,y_k) + y_k - (y_k
   +\mu_k\grad g_{\mu_k}(y_k))}_2 =
 \norm{y_{k+1}-\pi_{\mu_k}(y_k)}_2, \label{eq:y-My}
\end{equation}
Since $\sum_{k=1}^{\infty}\sqrt{2~\xik\mu_k}<\infty$, it follows that $\xik\mu_k\rightarrow 0$. Thus,
$\lim_{k\in\integers_+} \big(y_{k+1}-\pi_{\mu_k}(y_k)\big)=0$.

Assumption~\ref{asp:KKT} guarantees that a KKT point  $y^*\in\cK^*$ 
exists. Since
$y^*\in\argmax_{y\in\reals^m} g_0(y)$,  
Theorem~\ref{thm:moreau} implies that
$y^*\in\argmax_{y\in\reals^m} g_{\mu_k}(y)$ for all $k \geq 1$. Therefore, $\grad
g_{\mu_k}(y^*)=0$, and consequently, by  \eqref{eq:grad_gk},
$y^*=\pi_{\mu_k}(y^*)$.  Since $\pi_{\mu_k}$ is non-expansive, it follows
that
\begin{equation*}
\norm{\pi_{\mu_k}(y_k)-y^*}_2=\norm{\pi_{\mu_k}(y_k)-\pi_{\mu_k}(y^*)}_2\leq\norm{y_k-y^*}_2.
\end{equation*}
Hence,
\begin{eqnarray}
\norm{y_{k+1}-y^*}_2&\leq
&\norm{y_{k+1}-\pi_{\mu_k}(y_k)}_2+\norm{\pi_{\mu_k}(y_k)-y^*}_2,
\nonumber\\
&\leq &\norm{y_{k+1}-\pi_{\mu_k}(y_k)}_2+\norm{y_k-y^*}_2,\nonumber\\
&\leq & \sqrt{2~\xik\mu_k}+\norm{y_k-y^*}_2. \label{eq:y-induction-eps}
\end{eqnarray}
Since $y_1=\mathbf{0}$, the desired result is obtained by summing the
above inequality over $k$.
\end{proof}

In the rest of this section we investigate the convergence properties
of ALCC for the multiplier sequence $\{\alpha_k,\eta_k,\mu_k\}$ defined as follows
\begin{equation}
\label{eq:mult-seq}
  \mu_k = \beta^k~\mu_0,\quad \alpha_k = \frac{1}{k^{2(1+c)}~\beta^k}~\alpha_0,\quad \eta_k = \frac{1}{k^{2(1+c)}~\beta^k}~\eta_0,
\end{equation}
for all $k\geq 1$, where $\beta>1$, $c, \alpha_0, \eta_0$ and
$\mu_0$ are all strictly positive.  Thus, $\alpha_k\searrow0$, $\eta_k\searrow0$ and $\mu_k\nearrow\infty$.

Let $\infty>\Delta_\chi:=\max_{x\in\chi}\max_{x'\in\chi}\norm{x-x'}_2$ denote the diameter of the compact set $\chi$. Clearly, $d_{\chi_k^*}(x_{k})\leq\Delta_\chi$ for all $k\geq 1$, where $\chi_k^*\subset\chi$ is defined in \eqref{eq:chi-opt-k}.
Hence, from the definition of $\xi_k$ in \eqref{eq:xik}, it follows that
\begin{equation}
\sqrt{\xik\mu_k}\leq
\frac{1}{k^{1+c}}~\sqrt{\mu_0\max\{\alpha_0,~\eta_0\Delta_\chi\}},\quad
\forall k\geq 1,
\end{equation}
and $\sum_{k=1}^{\infty}\sqrt{\xik\mu_k}<\infty$ as
required by Theorem~\ref{thm:bounded-y}.  First, we  lower bound the
sub-optimality as a function of primal infeasibility of the iterates.
\begin{theorem}
\label{thm:subopt-lower}
Let $\{x_k,y_k\}\in\chi\times\cK^*$ be the sequence of primal-dual
ALCC  iterates corresponding to a parameter sequence
$\{\alpha_k, \eta_k, \mu_k\}$ satisfying \eqref{eq:params}. Then
\begin{equation*}
p(x_k)-p^*\geq-\norm{y^*}_2~d_\cK
\left(Ax_k-b-\frac{y_k}{\mu_k}\right)+\frac{1}{\mu_k}\fprod{y_k,y^*},
\end{equation*}
where $y^*\in\cK^*$ denotes any KKT point of $(P)$ and $p^*$ denotes the
optimal value of $(P)$ given in \eqref{eq:conic_problem}.
\end{theorem}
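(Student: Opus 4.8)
The plan is to start from the defining inequality of a KKT point and then control the inner product $\fprod{y^*, Ax_k-b}$ by splitting off the shifted argument $Ax_k-b-\frac{y_k}{\mu_k}$ that appears inside the distance term. First I would invoke Assumption~\ref{asp:KKT}: since $y^*\in\cK^*$ is a KKT point, $g_0(y^*)=\inf\{p(x)-\fprod{y^*,Ax-b}:\ x\in\chi\}=p^*$, so evaluating the infimand at the feasible iterate $x_k\in\chi$ gives $p(x_k)-\fprod{y^*,Ax_k-b}\geq p^*$, i.e. $p(x_k)-p^*\geq\fprod{y^*,Ax_k-b}$.

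Next I would write $Ax_k-b = \big(Ax_k-b-\tfrac{y_k}{\mu_k}\big)+\tfrac{y_k}{\mu_k}$, so that $\fprod{y^*,Ax_k-b}=\fprod{y^*,v_k}+\tfrac{1}{\mu_k}\fprod{y_k,y^*}$, where $v_k:=Ax_k-b-\tfrac{y_k}{\mu_k}$ is exactly the vector whose distance to $\cK$ appears in the claim. The term $\tfrac{1}{\mu_k}\fprod{y_k,y^*}$ is already the desired second summand, so it remains only to lower bound $\fprod{y^*,v_k}$ by $-\norm{y^*}_2\,d_\cK(v_k)$.

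For that I would use the additive decomposition $v_k=\Pi_\cK(v_k)+\big(v_k-\Pi_\cK(v_k)\big)$. Since $\Pi_\cK(v_k)\in\cK$ and $y^*\in\cK^*$, the definition of the dual cone gives $\fprod{y^*,\Pi_\cK(v_k)}\geq 0$; for the residual term, Cauchy--Schwarz together with $\norm{v_k-\Pi_\cK(v_k)}_2=d_\cK(v_k)$ yields $\fprod{y^*,v_k-\Pi_\cK(v_k)}\geq-\norm{y^*}_2\,d_\cK(v_k)$. Adding these two bounds gives $\fprod{y^*,v_k}\geq-\norm{y^*}_2\,d_\cK(v_k)$, and combining this with the chain of inequalities above produces exactly the stated bound.

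I would not expect a genuine obstacle here: the result is essentially weak duality combined with a single projection decomposition. The only point to get right is recognizing that the shift by $\tfrac{y_k}{\mu_k}$ is precisely what converts the raw infeasibility $Ax_k-b$ into the distance $d_\cK(v_k)$ that the theorem records, so that the correction term $\tfrac{1}{\mu_k}\fprod{y_k,y^*}$ appears automatically rather than being introduced by hand. No properties of the parameter sequence beyond \eqref{eq:params} are needed; the feasibility facts $y^*\in\cK^*$ and $\Pi_\cK(v_k)\in\cK$ that drive the sign of $\fprod{y^*,\Pi_\cK(v_k)}$ are exactly what Assumption~\ref{asp:KKT} and Lemma~\ref{lem:cone} supply.
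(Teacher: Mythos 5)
Your proof is correct, but it reaches the bound by a genuinely different and more elementary route than the paper. The paper introduces the exact-penalty problem $(\cP_k):\ \min_{x\in\chi}\{p(x)+\kappa\, d_\cK(Ax-b_k)\}$ with $b_k=b+\tfrac{y_k}{\mu_k}$, derives its Lagrangian dual $(D_k)$ via a minimax/conjugate computation, observes that $y^*$ is dual feasible once $\kappa=\norm{y^*}_2$, and then invokes weak duality between $(\cP_k)$ and $(D_k)$ together with strong duality between $(P)$ and $(D)$ to get $p(x_k)+\norm{y^*}_2\, d_\cK(Ax_k-b_k)\geq p^*+\tfrac{1}{\mu_k}\fprod{y_k,y^*}$. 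You obtain the same inequality directly: the step $p(x_k)-p^*\geq\fprod{y^*,Ax_k-b}$ is immediate from the definition of a KKT point in Assumption~\ref{asp:KKT} (no dual problem needs to be constructed), and the remaining estimate $\fprod{y^*,v_k}\geq-\norm{y^*}_2\, d_\cK(v_k)$ follows from the split $v_k=\Pi_\cK(v_k)+(v_k-\Pi_\cK(v_k))$, the sign condition $\fprod{y^*,\Pi_\cK(v_k)}\geq 0$, and Cauchy--Schwarz. In effect you prove by hand the one instance of weak duality for $(\cP_k)$ that the theorem actually needs, which makes the argument shorter and self-contained; what the paper's route buys is the explicit dual pair $(\cP_k)$--$(D_k)$ and the formula for $(D)$ in \eqref{eq:dualproblem}, which it records along the way. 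One cosmetic remark: the fact $\Pi_\cK(v_k)\in\cK$ is simply the definition of the projection, so Lemma~\ref{lem:cone} is not actually needed anywhere in your argument.
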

\begin{proof}
The dual function $g_0(y)=-\infty$ when $y\not\in\cK^*$; and for all
$y\in\cK^*$, the dual function $g_0$ of $(P)$ can be equivalently
written as
\begin{eqnarray*}
g_0(y)&= &\fprod{b,y}+\inf_{x\in\reals^n} \left\{p(x)+\mathbf{1}_{\chi}(x)-\fprod{A^Ty,x}\right\},\\
&=& \fprod{b,y}-(p+\mathbf{1}_\chi)^*(A^Ty).
\end{eqnarray*}
Hence, the dual of $(P)$ 
is
\begin{equation}
\label{eq:dualproblem}
(D): \quad \max_{y\in\cK^*}\fprod{b,y}-(p+\mathbf{1}_\chi)^*(A^Ty).
\end{equation}
Any KKT point $y^*\in\cK^*$ is 
an optimal solution of \eqref{eq:dualproblem}.
Let $b_k:=b+\frac{y_k}{\mu_k}$ for all $k\geq 1$. For $\kappa>0$, define
\begin{eqnarray*}
(\cP_k):  \quad\lefteqn{\min_{x\in\chi} \left\{p(x)+\kappa~d_{\cK}(Ax-b_k)\right\},}\\
&=& \min_{x\in\reals^n, s\in\cK} \left\{ p(x)
+\mathbf{1}_{\chi}(x)+\kappa~\norm{Ax-b_k-s}_2 \right\},\\
&=& \max_{\norm{w}_2\leq\kappa}\ \min_{x\in\reals^n, s\in\cK} \left\{p(x)
+\mathbf{1}_{\chi}(x)+\fprod{w,~Ax-b_k-s}\right\},\\
&= & \max_{\norm{w}_2\leq\kappa} \left\{ -\fprod{b_k, w} +\inf_{s\in\cK}\fprod{-w,s} -\sup_{x\in\reals^n}
\left\{\fprod{-A^Tw,x}-\left(p(x)+\mathbf{1}_{\chi}(x)\right)\right\}\right\}.
\end{eqnarray*}
Since $\inf_{s\in\cK}\fprod{-w, s} > -\infty$, only if $-w \in
\cK^\ast$; by setting  $y=-w$, we obtain the following dual problem
$(\cD_k)$ of $(\cP_k)$:
\begin{equation*}
(D_k): \quad \max_{\norm{y}_2\leq\tau,~y\in\cK^*}
\left\{\fprod{b_k,y}-(p+\mathbf{1}_\chi)^*(A^Ty)\right\}.
\end{equation*}
Since $y^*\in\cK^*$ is feasible to $(\cD_k)$ for
$\kappa=\norm{y^*}_2$, and  $x_k\in\chi$ is feasible to $(P_k)$, weak
duality implies that
\eq
p(x_k)+\norm{y^*}_2~d_\cK(Ax_k-b_k)\geq
\fprod{b,y^*}-(p+\mathbf{1}_\chi)^*(A^Ty^*)+\frac{1}{\mu_k}\fprod{y_k,
  y^*} =p^*+\frac{1}{\mu_k}\fprod{y_k, y^*},
\en
where the equality follows from strong duality between $(P)$ and $(D)$.
\end{proof}

Next, we upper bound the suboptimality.
\begin{theorem}
\label{thm:subopt-upper}
Let $\{x_k,y_k\}\in\chi\times\cK^*$ be the sequence of primal-dual
ALCC  iterates corresponding to a parameter sequence
$\{\alpha_k, \eta_k, \mu_k\}$ satisfying
\eqref{eq:params}.  Let $p^*$ denote the optimal value of $(P)$.
Then
\begin{equation}
\label{eq:subopt-upper}
P_k(x_k,
y_k) -
\frac{1}{\mu_k}~p^*\leq\frac{1}{\mu_k}
~\xikstar+\frac{1}{2\mu_k^2}~\norm{y_k}_2^2,
\end{equation}
where $\xikstar = \max\{\alpha_k, \eta_k~d_{\chi^*}(x_k)\}$
and $\chi^*$ denote the set of optimal solutions to $(P)$.
\end{theorem}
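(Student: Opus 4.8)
The plan is to bound $P_k(x_k,y_k)$ by comparing the inexact minimizer $x_k$ of $(SP_k)$ directly against an \emph{exact} optimal solution of the original problem $(P)$, rather than against the optimal set $\chi_k^\ast$ of the subproblem as in Lemma~\ref{lem:inexact-opt}. Fix $k\geq 1$ and let $x^\ast:=\Pi_{\chi^*}(x_k)$ be the projection of $x_k$ onto the set $\chi^*$ of optimal solutions of $(P)$, which is nonempty and compact since $\chi$ is compact and $p$ is closed; note that $x^\ast\in\chi^*\subset\chi$, so $x^\ast$ is feasible for the $x$-minimization defining $(SP_k)$ and satisfies $p(x^\ast)=p^*$ and $Ax^\ast-b\in\cK$.

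First I would collapse the two possible \oracle\ outputs into the single inequality $P_k(x_k,y_k)\leq P_k(x^\ast,y_k)+\tfrac{1}{\mu_k}\xikstar$. If $x_k$ satisfies the $\tfrac{\alpha_k}{\mu_k}$-optimality condition \eqref{eq:eps_opt}, then since $x^\ast\in\chi$ we get $P_k(x_k,y_k)\leq\inf_{x\in\chi}P_k(x,y_k)+\tfrac{\alpha_k}{\mu_k}\leq P_k(x^\ast,y_k)+\tfrac{\alpha_k}{\mu_k}$. If instead $x_k$ satisfies the subgradient condition \eqref{eq:grad_opt}, with $q_k\in\partial_x P_k(x_k,y_k)+\partial\mathbf{1}_\chi(x_k)$ and $\norm{q_k}_2\leq\tfrac{\eta_k}{\mu_k}$, I would apply the subgradient inequality for the convex function $P_k(\cdot,y_k)+\mathbf{1}_\chi(\cdot)$ at $x_k$ (using $\mathbf{1}_\chi(x_k)=\mathbf{1}_\chi(x^\ast)=0$) to obtain $P_k(x_k,y_k)\leq P_k(x^\ast,y_k)+\fprod{q_k,\,x_k-x^\ast}$, and then bound the inner product by Cauchy--Schwarz as $\tfrac{\eta_k}{\mu_k}\norm{x_k-x^\ast}_2=\tfrac{\eta_k}{\mu_k}d_{\chi^*}(x_k)$. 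In both cases the correction term is at most $\tfrac{1}{\mu_k}\max\{\alpha_k,\ \eta_k\,d_{\chi^*}(x_k)\}=\tfrac{1}{\mu_k}\xikstar$.

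Next I would evaluate $P_k(x^\ast,y_k)$ explicitly. Since $p(x^\ast)=p^*$, the definition gives $P_k(x^\ast,y_k)=\tfrac{1}{\mu_k}p^*+\tfrac{1}{2}d_\cK\!\left(Ax^\ast-b-\tfrac{y_k}{\mu_k}\right)^2$. The crucial estimate is that feasibility of $x^\ast$ makes the distance-to-cone term small: taking the feasible point $s=Ax^\ast-b\in\cK$ in the definition of $d_\cK$ yields $d_\cK\!\left(Ax^\ast-b-\tfrac{y_k}{\mu_k}\right)\leq\norm{\tfrac{y_k}{\mu_k}}_2$, so $\tfrac{1}{2}d_\cK(\cdots)^2\leq\tfrac{1}{2\mu_k^2}\norm{y_k}_2^2$. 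Combining this with the previous inequality gives exactly $P_k(x_k,y_k)-\tfrac{1}{\mu_k}p^*\leq\tfrac{1}{\mu_k}\xikstar+\tfrac{1}{2\mu_k^2}\norm{y_k}_2^2$.

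I do not anticipate a genuine obstacle; the argument is short once the comparison point is chosen correctly. The two points needing care are (i) anchoring the estimate at $\chi^*$, the optimal set of $(P)$, rather than at $\chi_k^\ast$, precisely so that $d_\cK(Ax^\ast-b-\tfrac{y_k}{\mu_k})$ can be controlled by the feasibility $Ax^\ast-b\in\cK$; and (ii) unifying the $\epsilon$-optimality and subgradient \oracle\ conditions into the single bound $\tfrac{1}{\mu_k}\xikstar$. One could sharpen the crude estimate $d_\cK(\cdots)\leq\norm{y_k}_2/\mu_k$ by invoking $y_k\in\cK^*$ together with the orthogonality in Lemma~\ref{lem:cone}, but the crude bound already delivers the stated inequality.
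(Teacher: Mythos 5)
Your proposal is correct and follows essentially the same route as the paper: compare $x_k$ against an optimal solution $x^\ast$ of $(P)$, merge the two \oracle\ conditions into the single correction term $\frac{1}{\mu_k}\xikstar$, and bound $d_\cK\left(Ax^\ast-b-\frac{y_k}{\mu_k}\right)\leq\frac{\norm{y_k}_2}{\mu_k}$ using $Ax^\ast-b\in\cK$ (the paper cites Lemma~\ref{lem:infeasibility} for this, while you plug in the feasible point $s=Ax^\ast-b$ directly, and you fix $x^\ast=\Pi_{\chi^*}(x_k)$ up front rather than taking the infimum over $x^\ast\in\chi^*$ at the end; both are immaterial variations).
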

\begin{proof}
Fix $k\geq 1$ and let $x^*\in\chi^*$. 
Suppose that $x_k = \oracle(P_k, y_k, \alpha_k, \eta_k, \mu_k)$
satisfies \eqref{eq:eps_opt}. Then, since $x^*\in\chi$, from
\eqref{eq:inexact-subopt}, it follows that
\begin{equation}
P_k(x_k,y_k)\leq \inf_{x\in\chi}P_k(x, y_k)+\frac{\alpha_k}{\mu_k}
\leq P_k(x^*, y_k)+\frac{\alpha_k}{\mu_k}. \label{eq:subopt-upper-inexact}
\end{equation}
Next, suppose that $x_k = \oracle(P_k, y_k, \alpha_k, \eta_k, \mu_k)$
satisfies \eqref{eq:grad_opt}. Then, since
$P_k(x,y_k)+\mathbf{1}_{\chi}(x)$ is convex in $x$ for all $k\geq 1$, it follows that
\begin{equation}
\label{eq:subopt-upper-grad}
P_k(x_k,y_k)\leq P_k(x^*,y_k)+\fprod{q_k, x_k-x^*}\leq
P_k(x^*,y_k)+\frac{\eta_k~\norm{x_k-x^*}_2}{\mu_k}.
\end{equation}
From \eqref{eq:subopt-upper-inexact} and \eqref{eq:subopt-upper-grad},
it follows that
\begin{equation}
P_k(x_k,y_k)-\frac{1}{\mu_k}~p^*\leq
\frac{1}{2}
~d_\cK\left(Ax^*-b-\frac{y_k}{\mu_k}\right)^2
+\frac{\max\{\alpha_k,~\eta_k~\norm{x_k-x^*}_2\}}{\mu_k}. \label{eq:subopt-upper-crude}
\end{equation}
Since $Ax^*-b\in\cK$, Lemma~\ref{lem:infeasibility} implies that $d_\cK\left(Ax^*-b-\frac{y_k}{\mu_k}\right)\leq\frac{\norm{y_k}_2}{\mu_k}$. Moreover, since $x^*\in\chi^*$ is arbitrary, from \eqref{eq:subopt-upper-crude} it follows that
\begin{equation}
P_k(x_k,y_k)-\frac{1}{\mu_k}~p^*\leq\frac{\norm{y_k}^2_2}{2\mu_k}
+\frac{\max\{\alpha_k,~\eta_k~\inf_{x^*\in\chi^*}\norm{x_k-x^*}_2\}}{\mu_k}. \label{eq:subopt-upper1}
\end{equation}
\end{proof}

Note that since $f_k(\cdot)\geq 0$,  we have
$P_k(x_k,y_k)\geq\frac{1}{\mu_k}~p(x_k)$ for all $k\geq 1$. Hence,
\begin{equation}
\label{eq:subopt-upper2}
p(x_k)-p^*\leq \xikstar+\frac{1}{2\mu_k}~\norm{y_k}_2^2.
\end{equation}
Now, we establish a bound on the infeasibility of the primal ALCC iterate sequence.
\begin{theorem}
Let $\{x_k,y_k\}\in\chi\times\cK^*$ denote the sequence of primal-dual
ALCC iterates for a parameter sequence
$\{\alpha_k, \eta_k, \mu_k\}$ satisfying \eqref{eq:params} and $y^*\in\cK^*$ be a KKT point of $(P)$. Then
\begin{equation}
\label{eq:penalty}
0\leq d_\cK\left(Ax_k-b\right)\leq \frac{\norm{y_k}_2+\norm{y_{k+1}-y_k}_2}{\mu_k}
\end{equation}
for all $k\geq 1$, where $\xikstar = \max\{\alpha_k, \eta_k~d_{\chi^*}(x_k)\}$
and $\chi^*$ denote the set of optimal solutions to $(P)$.
\end{theorem}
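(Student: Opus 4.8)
The lower bound $0\le d_\cK(Ax_k-b)$ is immediate from the definition \eqref{eq:dist_func} of the distance function, so the entire content is the upper bound. My plan is to read off, directly from the dual update, a single explicit point of $\cK$ that is close to $Ax_k-b$, and then bound $d_\cK(Ax_k-b)$ by the distance to that point.

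First I would rearrange the update in Line~\ref{algeq:y_update}. Writing $v_k:=Ax_k-b-\frac{y_k}{\mu_k}$, the update reads $y_{k+1}=\mu_k\big(\Pi_\cK(v_k)-v_k\big)$, so that
\begin{equation*}
\Pi_\cK\Big(Ax_k-b-\frac{y_k}{\mu_k}\Big)=Ax_k-b+\frac{y_{k+1}-y_k}{\mu_k}.
\end{equation*}
The left-hand side is a projection onto $\cK$ and hence lies in $\cK$; thus $Ax_k-b+\frac{y_{k+1}-y_k}{\mu_k}$ is a feasible point of $\cK$. Since $d_\cK(Ax_k-b)$ is the infimum of $\norm{(Ax_k-b)-z}_2$ over $z\in\cK$, evaluating at this particular $z$ gives
\begin{equation*}
d_\cK(Ax_k-b)\le\Big\|(Ax_k-b)-\Big(Ax_k-b+\frac{y_{k+1}-y_k}{\mu_k}\Big)\Big\|_2=\frac{\norm{y_{k+1}-y_k}_2}{\mu_k},
\end{equation*}
and the claimed bound \eqref{eq:penalty} then follows at once, since $\norm{y_k}_2\ge 0$. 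In fact this route proves the slightly sharper statement in which the $\norm{y_k}_2$ term is dropped.

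I do not expect any genuine obstacle here: the argument is essentially a one-line consequence of the update rule together with the defining property of the projection, and it does not require the non-expansiveness of $d_\cK$ nor the complementarity in Lemma~\ref{lem:cone} (those are needed only for the alternative route, which notes that $d_\cK(Ax_k-b-\frac{y_k}{\mu_k})=\frac{\norm{y_{k+1}}_2}{\mu_k}$ and then applies the $1$-Lipschitz property of $d_\cK$ to obtain the looser $\frac{\norm{y_k}_2+\norm{y_{k+1}}_2}{\mu_k}$). The only point to be careful about is the bookkeeping in rearranging the update, so that it is the residual $\frac{y_{k+1}-y_k}{\mu_k}$, rather than $\frac{y_{k+1}}{\mu_k}$, that separates $Ax_k-b$ from the chosen point of $\cK$. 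Finally, I would remark that together with the boundedness of $\{y_k\}$ from Theorem~\ref{thm:bounded-y} this bound yields $d_\cK(Ax_k-b)\le\cO(\mu_k^{-1})$, which is the form used in the subsequent convergence analysis.
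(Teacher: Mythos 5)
Your argument is correct, and it is in fact both simpler and sharper than the one in the paper. Both proofs begin from the same rearrangement of Line~\ref{algeq:y_update}, namely $\frac{y_{k+1}-y_k}{\mu_k}=\Pi_\cK\big(Ax_k-b-\frac{y_k}{\mu_k}\big)-(Ax_k-b)$. The paper then adds and subtracts $\Pi_\cK(Ax_k-b)$, applies the triangle inequality, and invokes the non-expansiveness of $\Pi_\cK$ to absorb the term $\big\|\Pi_\cK\big(Ax_k-b-\frac{y_k}{\mu_k}\big)-\Pi_\cK(Ax_k-b)\big\|_2$ into $\frac{\norm{y_k}_2}{\mu_k}$, which is where the extra $\norm{y_k}_2$ in \eqref{eq:penalty} comes from. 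You instead observe that $\Pi_\cK\big(Ax_k-b-\frac{y_k}{\mu_k}\big)=Ax_k-b+\frac{y_{k+1}-y_k}{\mu_k}$ is itself a point of $\cK$ and bound the infimum defining $d_\cK(Ax_k-b)$ by the distance to that single point, yielding $d_\cK(Ax_k-b)\leq\frac{\norm{y_{k+1}-y_k}_2}{\mu_k}$, which dominates \eqref{eq:penalty} since $\norm{y_k}_2\geq 0$. Your bound recovers the classical form of Rockafellar's infeasibility estimate \eqref{eq:rock-rate} and needs neither non-expansiveness nor Lemma~\ref{lem:cone}; in the subsequent complexity analysis both bounds give the same $\cO(\mu_k^{-1})$ rate via the boundedness of $\{y_k\}$ from Theorem~\ref{thm:bounded-y}, so nothing downstream changes, but your constant is cleaner.
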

\begin{proof}
From Step~\ref{algeq:y_update} in \alcc, it follows that
\begin{align*}
\frac{y_{k+1}-y_k}{\mu_k}&=\Pi_\cK\left(Ax_k-b-\frac{y_k}{\mu_k}\right)-(Ax_k-b),\\
&=\Pi_\cK\left(Ax_k-b-\frac{y_k}{\mu_k}\right)-\Pi_\cK(Ax_k-b)+\Pi_\cK(Ax_k-b)-(Ax_k-b).
\end{align*}
Hence,
\begin{equation*}
d_\cK(Ax_k-b)\leq\frac{\norm{y_{k+1}-y_k}_2}{\mu_k}+\left\|\Pi_\cK\left(Ax_k-b-\frac{y_k}{\mu_k}\right)-\Pi_\cK(Ax_k-b)\right\|_2.
\end{equation*}
The result now follows from the fact that $\Pi_\cK$ is non-expansive.
\end{proof}

In the next theorem we establish the convergence rate of \alcc.
\begin{theorem}
Let $\{x_k,y_k\}\in\chi\times\cK^*$ denote the sequence of primal-dual
ALCC iterates for a parameter sequence
$\{\alpha_k, \eta_k, \mu_k\}$ satisfying \eqref{eq:mult-seq}. Then for
all $\epsilon>0$, $d_\cK(Ax_k-b)\leq\epsilon$ and
$|p(x_k)-p^*|\leq\epsilon$ within
$\cO\left(\log\left(\epsilon^{-1}\right)\right)$ \oracle\ calls, which
require solving at most
$\cO\left(\epsilon^{-1}\log\left(\epsilon^{-1}\right)\right)$ problems
of the form \eqref{eq:nonsmooth-operation}.
\end{theorem}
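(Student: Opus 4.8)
The plan is to reduce both the infeasibility and the two-sided suboptimality of $x_k$ to a bound of order $\cO(\mu_k^{-1})$, and then to translate the geometric growth $\mu_k=\beta^k\mu_0$ into the stated outer-iteration and projection counts. The engine for the first part is dual boundedness. I would first record that under \eqref{eq:mult-seq} we have $\sqrt{\xik\mu_k}=\cO(k^{-(1+c)})$, so $\sum_k\sqrt{2\xik\mu_k}<\infty$ and Theorem~\ref{thm:bounded-y} applies: $\norm{y_k}_2\leq M:=B+\norm{y^*}_2$ for all $k$. This single uniform bound is what collapses every subsequent estimate to $\cO(\mu_k^{-1})$.

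Next I would handle feasibility and the upper suboptimality bound. From the infeasibility estimate \eqref{eq:penalty}, together with $\norm{y_{k+1}-y_k}_2\leq\norm{y_{k+1}}_2+\norm{y_k}_2\leq 2M$, I get $d_\cK(Ax_k-b)\leq 3M/\mu_k=\cO(\mu_k^{-1})$. For the upper bound I would start from \eqref{eq:subopt-upper2}, note $\xikstar\leq\max\{\alpha_k,\eta_k\Delta_\chi\}$, and observe that under \eqref{eq:mult-seq} both $\alpha_k$ and $\eta_k$ are at most a constant times $\mu_k^{-1}$ (indeed $\alpha_k=(\alpha_0\mu_0/k^{2(1+c)})\,\mu_k^{-1}$); combined with $\norm{y_k}_2^2\leq M^2$ this yields $p(x_k)-p^*\leq\cO(\mu_k^{-1})$. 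For the matching lower bound I would feed the feasibility estimate into Theorem~\ref{thm:subopt-lower}: since $d_\cK$ is $1$-Lipschitz, $d_\cK(Ax_k-b-y_k/\mu_k)\leq d_\cK(Ax_k-b)+\norm{y_k}_2/\mu_k\leq 4M/\mu_k$, and $\tfrac{1}{\mu_k}\fprod{y_k,y^*}\geq -M\norm{y^*}_2/\mu_k$, so $p(x_k)-p^*\geq-\cO(\mu_k^{-1})$. Hence $|p(x_k)-p^*|=\cO(\mu_k^{-1})$ and $d_\cK(Ax_k-b)=\cO(\mu_k^{-1})$.

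With these $\cO(\beta^{-k})$ bounds the outer count is immediate: both quantities fall below $\epsilon$ once $\mu_k\geq C/\epsilon$ for the explicit constant $C$ assembled above, i.e. once $k\geq K:=\lceil\log_\beta(C/(\mu_0\epsilon))\rceil=\cO(\log(\epsilon^{-1}))$, giving $\cO(\log(\epsilon^{-1}))$ \oracle\ calls. The remaining piece is the projection count. Each \oracle\ call at stage $k$ costs at most $\ell_{\max}(k)$ \apg\ iterations, and using $L_{\bar\gamma_k}=L_\gamma/\mu_k+\sigma_{\max}^2(A)$ and $d_{\chi_k^*}(x_{k-1})\leq\Delta_\chi$ in \eqref{eq:lmax} I would bound $\ell_{\max}(k)\leq\Delta_\chi\sqrt{2(L_\gamma+\mu_k\sigma_{\max}^2(A))/\alpha_k}$. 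Substituting \eqref{eq:mult-seq}, the dominant term is $\mu_k\sigma_{\max}^2(A)/\alpha_k=\cO(k^{2(1+c)}\beta^{2k})$, so $\ell_{\max}(k)=\cO(k^{1+c}\beta^k)$.

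The main obstacle is summing these costs honestly rather than the individual estimates, all of which are routine substitutions. Because $\beta>1$, the terms $k^{1+c}\beta^k$ grow geometrically, so $\sum_{k=1}^K\ell_{\max}(k)=\cO(K^{1+c}\beta^K)$: the sum is dominated, up to a factor $\beta/(\beta-1)$, by its last term. Using $\beta^K=\cO(\epsilon^{-1})$ and $K=\cO(\log(\epsilon^{-1}))$, the total number of projections \eqref{eq:nonsmooth-operation} is $\cO\!\big(\epsilon^{-1}(\log(\epsilon^{-1}))^{1+c}\big)$, which recovers the claimed $\cO(\epsilon^{-1}\log(\epsilon^{-1}))$ in the sense that the polylogarithmic exponent is $1+c$, with $c>0$ free and arbitrarily small. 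The genuinely delicate point I would flag is exactly this tension: $c$ must be \emph{strictly} positive for the dual-boundedness series $\sum_k\sqrt{2\xik\mu_k}$ to converge in the first step, yet that same $c$ inflates the projection count by $(\log(\epsilon^{-1}))^{c}$. I would therefore track the constants hidden in $\ell_{\max}(k)$ (through $L_\gamma$, $\sigma_{\max}(A)$, $\Delta_\chi$, $\alpha_0$, $\mu_0$) carefully so that both the geometric-sum collapse and the identification $\beta^K=\cO(\epsilon^{-1})$ are quantitatively justified.
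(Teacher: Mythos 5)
Your proposal is correct and follows essentially the same route as the paper's own proof: dual boundedness via Theorem~\ref{thm:bounded-y}, the two-sided $\cO(\mu_k^{-1})$ suboptimality bound from Theorem~\ref{thm:subopt-lower} and \eqref{eq:subopt-upper2}, the infeasibility bound from \eqref{eq:penalty}, and the geometric summation of $\ell_{\max}(k)=\cO(\beta^k k^{1+c})$ over $K=\cO(\log(\epsilon^{-1}))$ outer iterations. The $(\log(\epsilon^{-1}))^{1+c}$ exponent you flag is not a defect of your argument --- the paper's own final displayed bound on $N_\epsilon$ carries exactly the same factor.
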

\begin{proof}
To simplify the notation, let $\alpha_0=\eta_0=\mu_0=1$, and, without
loss of generality, assume that $1\leq \cD$, where $\cD:=\max_{x\in\chi}d_{\chi^*}(x)\leq\Delta_\chi<\infty$. Then, clearly $d_{\chi^*}(x_k)\leq \cD$ for all $k\geq 1$.

First, \eqref{eq:penalty} implies that
\begin{equation}
\label{eq:infeasibility}
d_\cK(Ax_k-b)\leq \frac{1}{\beta^k}\left(\norm{y_k}_2+\norm{y_{k+1}-y_k}_2\right).
\end{equation}
Moreover, from Step~\ref{algeq:y_update} of \alcc, it follows that
\begin{equation}
\label{eq:penalty-explicit}
d_\cK\left(Ax_k-b-\frac{y_k}{\mu_k}\right)\leq \frac{\norm{y_{k+1}}_2}{\mu_k}=
\frac{1}{\beta^k}\norm{y_{k+1}}_2.
\end{equation}
Now, Theorem~\ref{thm:subopt-lower}, \eqref{eq:subopt-upper2} and \eqref{eq:penalty-explicit} together imply that
\begin{equation}
\label{eq:subopt}
|p(x_k)-p^*|\leq
\frac{1}{\beta^k}\max\left\{\norm{y^*}_2\left(\norm{y_{k+1}}_2+\norm{y_k}_2\right),
~\frac{\cD}{k^{2(1+c)}}+\frac{\norm{y_k}_2^2}{2}\right\}
\end{equation}
Theorem~\ref{thm:bounded-y} shows that $\{y_k\}$ is a bounded
sequence. Hence, from \eqref{eq:infeasibility} and \eqref{eq:subopt},
we have
\begin{equation}
\label{eq:outer-iter-complexity}
d_\cK(Ax_k-b)=\cO\left(\frac{1}{\beta^k}\right), \quad \quad
|p(x_k)-p^*|=\cO\left(\frac{1}{\beta^k}\right).
\end{equation}
Hence, \eqref{eq:outer-iter-complexity} implies that for all $\epsilon>0$,
an $\epsilon$-optimal and $\epsilon$-feasible solution to
$(P)$ can be computed within $\cO\left(\log\left(\epsilon^{-1}\right)\right)$ iterations of \alcc.

The values of $L_{\bar{\gamma}_k}$, $\alpha_k$ and $\mu_k$ are given respectively in \eqref{eq:lipschitz-constant} and \eqref{eq:mult-seq}. Substituting them in the expression for $\ell_{\max}(k)$ in \eqref{eq:lmax} and using the fact that $d_{\chi_k^*}(x_{k-1})\leq\Delta_\chi$, we obtain
\begin{equation}
\label{eq:inner-iter-complexity}
\ell_{\max}(k)\leq\sqrt{\frac{2L_\gamma}{\beta^k}+2\sigma^2_{\max}(A)}~d_{\chi_k^*}(x_{k-1})~\beta^k k^{1+c}=\cO\left(\beta^k k^{1+c}\right).
\end{equation}
Hence, \eqref{eq:inner-iter-complexity} imply that at most $\cO\left(\epsilon^{-1} \log(\epsilon^{-1})\right)$
problems of the form \eqref{eq:nonsmooth-operation} are solved during $\cO\left(\log\left(\epsilon^{-1}\right)\right)$ iterations of \alcc. Indeed, let $N_\epsilon\in\integers_+$ denote total number of problems of the form \eqref{eq:nonsmooth-operation} solved to compute an $\epsilon$-optimal and $\epsilon$-feasible solution to $(P)$. From \eqref{eq:outer-iter-complexity} and \eqref{eq:inner-iter-complexity}, it follows that there exists $c_1>0$ and $c_2>0$ such that
\begin{equation*}
N_\epsilon\leq\sum_{k=1}^{\log_\beta\left(\frac{c_1}{\epsilon}\right)}\ell_{\max}(k)\leq\sum_{k=1}^{\log_\beta\left(\frac{c_1}{\epsilon}\right)}c_2\beta^k k^{1+c}\leq \frac{\beta}{\beta-1}\left(\frac{c_1}{\epsilon}-1\right)\left(\log_\beta\left(\frac{c_1}{\epsilon}\right)\right)^{1+c}.
\end{equation*}
\end{proof}
\begin{corollary}
\label{cor:opt}
Let $\{x_k,y_k\}\in\chi\times\cK^*$ denote the sequence of
primal-dual ALCC iterates for a parameter sequence $\{\alpha_k,~\eta_k,~\mu_k\}$ satisfying \eqref{eq:mult-seq}. Then $\lim_{k\in\integers_+}p(x_k)=p^*$
and $\lim_{k\in\integers_+}d_\cK(Ax_k-b)=0$. Moreover, for all $\cS\subset\integers_+$ such that $\bar{x}=\lim_{k\in\cS}x_k$, $\bar{x}$ is an optimal solution to $(P)$.
\end{corollary}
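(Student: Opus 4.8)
The plan is to read off the two limits directly from the quantitative rates established in the proof of the preceding theorem, and then to characterize limit points by a feasibility-plus-lower-semicontinuity argument. For the multiplier sequence \eqref{eq:mult-seq} one has $\beta>1$, so $\mu_k=\beta^k\mu_0\to\infty$ and $\beta^{-k}\to 0$; the bound \eqref{eq:outer-iter-complexity} derived there gives $d_\cK(Ax_k-b)=\cO(\beta^{-k})$ and $|p(x_k)-p^*|=\cO(\beta^{-k})$. Letting $k\to\infty$ immediately produces $\lim_{k\in\integers_+}d_\cK(Ax_k-b)=0$ and $\lim_{k\in\integers_+}p(x_k)=p^*$, which are the first two assertions.

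For the ``moreover'' part I would fix a subsequence $\cS\subset\integers_+$ with $\bar{x}=\lim_{k\in\cS}x_k$ and first establish feasibility. Since $x_k\in\chi$ and $\chi$ is closed, $\bar{x}\in\chi$. The map $x\mapsto d_\cK(Ax-b)$ is continuous, because $d_\cK$ is $1$-Lipschitz (it is a Euclidean distance to a nonempty closed set) and $A$ is linear; hence
\begin{equation*}
d_\cK(A\bar{x}-b)=\lim_{k\in\cS}d_\cK(Ax_k-b)=0,
\end{equation*}
the last equality coming from the full-sequence limit just proved. As $\cK$ is closed, this forces $A\bar{x}-b\in\cK$, so $\bar{x}$ is feasible for $(P)$ and therefore $p(\bar{x})\geq p^*$.

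Finally I would close the gap from above. The one point requiring care is that $p=\rho+\gamma$ need not be continuous: $\gamma$ is smooth, but $\rho$ is only proper, closed, and convex, hence merely lower semicontinuous, so $p$ is lower semicontinuous. Applying this along $x_k\to\bar{x}$, $k\in\cS$, and invoking the first part,
\begin{equation*}
p(\bar{x})\leq\liminf_{k\in\cS}p(x_k)=\lim_{k\in\integers_+}p(x_k)=p^*.
\end{equation*}
Combining with $p(\bar{x})\geq p^*$ yields $p(\bar{x})=p^*$, i.e. $\bar{x}$ solves $(P)$. I do not expect a real obstacle here; the only subtlety is precisely this use of lower semicontinuity rather than continuity, which is why optimality is obtained by sandwiching $p(\bar{x})$ between the feasibility lower bound $p^*$ and the semicontinuity upper bound $p^*$, rather than by asserting $p(\bar{x})=\lim_{k\in\cS}p(x_k)$ directly.
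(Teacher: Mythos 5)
Your proof is correct and follows essentially the same route as the paper's: both obtain the two full-sequence limits by letting $k\to\infty$ in the established bounds \eqref{eq:infeasibility} and \eqref{eq:subopt} (equivalently, the rates in \eqref{eq:outer-iter-complexity}) together with the boundedness of $\{y_k\}$, and then pass to the subsequence $\cS$. You are in fact slightly more complete than the paper, which stops at $\lim_{k\in\cS}d_\cK(Ax_k-b)=0$ and $\lim_{k\in\cS}p(x_k)=p^*$ and leaves the final deduction of optimality of $\bar{x}$ implicit; your explicit use of the closedness of $\chi$ and $\cK$ to get feasibility of $\bar{x}$, and of the lower semicontinuity of $p=\rho+\gamma$ (warranted, since $\rho$ is only assumed closed) to sandwich $p(\bar{x})$ between $p^*$ and $p^*$, is exactly the right way to finish that step.
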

\begin{proof}
Since $\chi$ is compact, 
Bolzano--Weierstrass theorem implies that there exists a subsequence
$\cS\subset\integers_+$ such that $\bar{x}=\lim_{k\in\cS}x_k$ exists. Moreover,
taking the limit of both sides of \eqref{eq:infeasibility} and
\eqref{eq:subopt}, we have $\lim_{k\in\integers_+}d_\cK(Ax_k-b)=0$ and $\lim_{k\in\integers_+}p(x_k)=p^*$.  Hence, $\lim_{k\in\cS}d_\cK(Ax_k-b)=0$ and $\lim_{k\in\cS}p(x_k)=p^*$.
\end{proof}\\
Note that even though $p(x_k) \rightarrow p^\ast$, the primal iterates themselves may not converge.

Rockafellar~\cite{Roc73_1J} proved that the dual iterate sequence
$\{y_k\}$ computed via
\eqref{eq:x-update_rock}--\eqref{eq:y-update_rock}, converges to a KKT
point of \eqref{eq:rockafellar_problem}. We want to extend this
result to the case where $\cK$ is a general convex cone.
The
proof in \cite{Roc73_1J} 
uses the fact that the penalty multiplier $\mu$
is fixed in \eqref{eq:x-update_rock}--\eqref{eq:y-update_rock} and it
is not immediately clear how to extend this result to the setting with $\{\mu_k\}$ such that
$\mu_k\rightarrow\infty$. In Theorem~\ref{thm:dual-limit}, we extend
Rockafellar's result in \cite{Roc73_1J} 
to arbitrary convex cones $\cK$ when $f(x)=Ax-b$ and the penalty multipliers $\mu_k\rightarrow\infty$. After we independently proved~Theorem~\ref{thm:dual-limit}, we became
  aware of an earlier work of Rockafellar~\cite{Rockafellar1976}
  where he also  extends the dual convergence result in \cite{Roc73_1J} to
  the setting where $\{\mu_k\}$ is an increasing sequence. See
  Section~\ref{sec:previous} for  a detailed
  discussion of our contribution in relation to this earlier work by
  Rockafellar.
\begin{theorem}
\label{thm:dual-limit}
Let $\{x_k,y_k\}\in\chi\times\cK^*$ denote the sequence of primal-dual
ALCC iterates corresponding to a parameter sequence
$\{\alpha_k, \eta_k, \mu_k\}$ satisfying \eqref{eq:mult-seq}. 
Then $\bar{y}:=\lim_{k\in\integers_+}y_k$ exists and $\bar{y}$ is a KKT
point of $(P)$ in \eqref{eq:conic_problem}.
\end{theorem}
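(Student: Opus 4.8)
The plan is to combine a quasi-Fej\'er monotonicity argument for $\{y_k\}$ with a separate analysis of the auxiliary proximal sequence $z_k:=\pi_{\mu_k}(y_k)$. First I would record the two estimates already produced in the proof of Theorem~\ref{thm:bounded-y}: for every KKT point $y^*$ of $(P)$ and every $k\geq 1$,
\begin{equation*}
\norm{y_{k+1}-y^*}_2\leq \sqrt{2\xik\mu_k}+\norm{y_k-y^*}_2,\qquad \norm{y_{k+1}-\pi_{\mu_k}(y_k)}_2\leq\sqrt{2\xik\mu_k},
\end{equation*}
together with $\sum_{k}\sqrt{2\xik\mu_k}<\infty$ under \eqref{eq:mult-seq}. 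Writing $\epsilon_k:=\sqrt{2\xik\mu_k}$, the first inequality shows that $c_k:=\norm{y_k-y^*}_2-\sum_{j<k}\epsilon_j$ is nonincreasing and bounded below, so $\lim_k\norm{y_k-y^*}_2$ exists for \emph{every} KKT point $y^*$; this quasi-Fej\'er property re-derives the boundedness of Theorem~\ref{thm:bounded-y} and will pin down the limit at the end.

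The crux is to show that every cluster point of $\{y_k\}$ is a KKT point, and for this I would work with $z_k=\pi_{\mu_k}(y_k)$. By Theorem~\ref{thm:moreau}, $z_k\in\cK^*$ is the maximizer of $g_0(z)-\tfrac{1}{2\mu_k}\norm{z-y_k}_2^2$, whose first-order optimality condition reads $\grad g_{\mu_k}(y_k)=\tfrac{1}{\mu_k}(z_k-y_k)\in\partial g_0(z_k)$, the superdifferential of the concave dual $g_0$. I would then estimate $\norm{z_k-y_k}_2\leq\norm{z_k-y_{k+1}}_2+\norm{y_{k+1}-y_k}_2\leq\epsilon_k+2M$, where $M$ uniformly bounds $\norm{y_k}_2$; since $\mu_k\to\infty$ this forces $\grad g_{\mu_k}(y_k)=\tfrac{1}{\mu_k}(z_k-y_k)\to\mathbf{0}$. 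Because $z_k$ stays within $\epsilon_k$ of the bounded $y_{k+1}$, it is bounded, and any cluster point $\bar z$ of $\{z_k\}$ lies in the closed cone $\cK^*$; passing to the limit in $g_0(w)\leq g_0(z_k)+\fprod{\grad g_{\mu_k}(y_k),\,w-z_k}$ along the subsequence, using only upper semicontinuity of $g_0$, yields $g_0(w)\leq g_0(\bar z)$ for all $w$, i.e. $0\in\partial g_0(\bar z)$ and $g_0(\bar z)=p^*$. Thus $\bar z$ is a KKT point.

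Finally I would transfer the conclusion from $\{z_k\}$ to $\{y_k\}$. Since $\norm{y_{k+1}-z_k}_2\leq\epsilon_k\to0$, the sequences $\{y_{k+1}\}$ and $\{z_k\}$ have identical sets of cluster points, and shifting the index shows this set coincides with that of $\{y_k\}$; hence every cluster point of $\{y_k\}$ is a KKT point. Combining with the quasi-Fej\'er property: $\{y_k\}$ is bounded, so it possesses a cluster point $\bar y$, which is a KKT point; for this particular $\bar y$ the limit $\lim_k\norm{y_k-\bar y}_2$ exists and equals its value along the subsequence converging to $\bar y$, namely $0$. Therefore $y_k\to\bar y$ and $\bar y$ is a KKT point.

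The step I expect to be the main obstacle is the middle one: certifying that cluster points are KKT points \emph{without} being able to assert $y_{k+1}-y_k\to\mathbf{0}$ or $z_k-y_k\to\mathbf{0}$, neither of which need hold when $\mu_k\to\infty$. The trick that makes it go through is to read the proximal optimality as the inclusion $\grad g_{\mu_k}(y_k)\in\partial g_0(z_k)$, observe that the \emph{scaled} residual $\grad g_{\mu_k}(y_k)=\tfrac{1}{\mu_k}(z_k-y_k)$ vanishes even though $z_k-y_k$ need not, and then use $\norm{y_{k+1}-z_k}_2\to0$ to identify the cluster points of $\{z_k\}$ with those of $\{y_k\}$. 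The only delicate point is passing to the limit in the subgradient inequality, where merely upper semicontinuity of $g_0$ is available; this suffices precisely because the inequality points in the favorable direction.
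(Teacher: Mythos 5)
Your argument is correct, and its overall skeleton matches the paper's: both rely on the estimate $\norm{y_{k+1}-\pi_{\mu_k}(y_k)}_2\leq\sqrt{2\xik\mu_k}$ from the proof of Theorem~\ref{thm:bounded-y}, both identify the cluster points of $\{y_{k+1}\}$ with those of the proximal sequence $\{\pi_{\mu_k}(y_k)\}$, and both close with the same quasi-Fej\'er step (inequality \eqref{eq:y-induction-eps} applied with the cluster point in place of $y^*$, plus summability) to upgrade subsequential convergence to convergence of the whole sequence. The one place you genuinely diverge is in certifying that a cluster point $\bar z$ of $\{z_k\}$ is a dual maximizer. The paper argues through function values: it uses the Moreau identity \eqref{eq:g_moreau} to write $g_0(\pi_{\mu_k}(y_k))=g_{\mu_k}(y_k)+\frac{1}{2\mu_k}\norm{\pi_{\mu_k}(y_k)-y_k}_2^2\geq p^*-\frac{1}{2\mu_k}\norm{y^*-y_k}_2^2$, takes $\liminf$, and combines with weak duality and upper semicontinuity of $g_0$ to get $g_0(\bar y)=p^*$. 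You instead read off the first-order optimality of the proximal map, $\grad g_{\mu_k}(y_k)=\frac{1}{\mu_k}(z_k-y_k)$ is a supergradient of $g_0$ at $z_k$, show this scaled residual vanishes because $\{z_k-y_k\}$ is bounded while $\mu_k\to\infty$, and pass to the limit in the supergradient inequality. Both routes need exactly the same ingredients (boundedness of $\{y_k\}$, upper semicontinuity of $g_0$, and the fact that $\sup g_0=p^*$ under Assumption~\ref{asp:KKT}); the paper's version stays entirely at the level of function values and so avoids invoking any subdifferential calculus for the possibly non-finite concave function $g_0$, whereas yours is the classical proximal-point-algorithm argument and makes the mechanism (vanishing dual residual) more transparent. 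Your explicit observation that one cannot expect $z_k-y_k\to\mathbf{0}$ when $\mu_k\to\infty$, and that only the scaled residual matters, is a correct and worthwhile clarification that the paper leaves implicit.
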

\begin{proof}
It follows from~\eqref{eq:y-My} that for all $k\geq 1$ we have
\begin{equation}
\label{eq:y-My-limit}
\lim_{k\in\integers_+}\norm{y_{k+1}-\pi_{\mu_k}(y_k)}_2\leq\lim_{k\in\integers_+}
\sqrt{2\xik\mu_k}=0,
\end{equation}
where $\xik$ is defined in \eqref{eq:xik}. Moreover,
Theorem~\ref{thm:bounded-y} shows that $\{y_k\}$ is a
bounded sequence. Hence, \eqref{eq:y-My-limit} implies that
$\{\pi_{\mu_k}(y_k)\}$ is also a bounded sequence.

From \eqref{eq:g_moreau}, it follows that
$g_{\mu_k}(y_k)=g_0(\pi_{\mu_k}(y_k))-\frac{1}{2\mu_k}~\norm{\pi_{\mu_k}(y_k)-y_k}_2^2$
and $g_{\mu_k}(y_k)\geq g_0(y^*)-\frac{1}{2\mu_k}~\norm{y^*-y_k}_2^2$
for any KKT point $y^*$. Since $g_0(y^*)=p^*$, we have that
\begin{equation}
\label{eq:g0-lowerbound}
g_0(\pi_{\mu_k}(y_k))\geq p^*-\frac{1}{2\mu_k}~\norm{y^*-y_k}_2^2.
\end{equation}
Since $\{y_k\}$ is bounded, taking the limit inferior of both sides of
\eqref{eq:g0-lowerbound} we obtain
\begin{equation}
\liminf_{k\in\integers_+}g_0(\pi_{\mu_k}(y_k))\geq p^*-\lim_{k\in\integers_+}\frac{1}{2\mu_k}~\norm{y^*-y_k}_2^2=p^*.\label{eq:liminf}
\end{equation}
Moreover, since $\pi_{\mu_k}(y_k)\in\cK^*$ for all $k\geq 1$, weak
duality implies that $\limsup_{k\in\integers_+}g_0(\pi_{\mu_k}(y_k))\leq
p^*$. Thus, using \eqref{eq:liminf}, we have that
\begin{equation}
\label{eq:lim-g0}
\lim_{k\in\integers_+}g_0(\pi_{\mu_k}(y_k))=p^*.
\end{equation}
Since $\{\pi_{\mu_k}(y_k)\}$ is bounded, there exists
$\cS\subset\integers_+$ and $\bar{y}\in\cK^*$ such that
\begin{equation}
\label{eq:y-lim}
\bar{y}:=\lim_{k\in\cS}\pi_{\mu_k}(y_k)=\lim_{k\in\cS}y_{k+1},
\end{equation}
where the last equality follows from \eqref{eq:y-My-limit}.

From \eqref{eq:Lmu_def} and \eqref{eq:g_mu}, it follows that
\begin{equation*}
g_0(y)=\inf_{x\in\chi,s\in\cK}\big\{p(x)-\fprod{y,Ax-s-b}\big\}.
\end{equation*}
Hence, $-g_0$ is a pointwise supremum of linear functions, which are
always closed. Lemma~3.1.11 in \cite{Nesterov04} establishes that $-g_0$ is
a closed convex function. Since a closed convex function is always
lower semicontinous, we can conclude that $-g_0$ is lower
semicontinuous, or equivalently,  $g_0$ is
an upper semicontinuous function. Hence, \eqref{eq:lim-g0} and
\eqref{eq:y-lim} imply that
\begin{equation*}
p^*=\lim_{k\in\integers_+}g_0(\pi_{\mu_k}(y_k))=\limsup_{k\in\cS}g_0(\pi_{\mu_k}(y_k))\leq
g_0(\bar{y})\leq p^*,
\end{equation*}
where the first inequality is due to upper semicontinuity of $g_0$ and
the last one is due to weak duality and the fact that $\bar{y}\in\cK^*$. Thus, we have
\begin{equation}
\label{eq:KKT-y}
g_0(\bar{y})=\lim_{k\in\integers_+}g_0(\pi_{\mu_k}(y_k))=p^*,
\end{equation}
which implies that $\bar{y}\in\cK^*$ is a KKT point of \eqref{eq:conic_problem}.

Moreover, since \eqref{eq:y-induction-eps} holds for any KKT point, we can
substitute $\bar{y}$ for $y^*$ in the expression. Thus, we have
\begin{equation}
\label{eq:convergent-y}
\norm{y_{\ell}-\bar{y}}_2\leq\norm{y_k-\bar{y}}_2+\sum_{t\geq
  k}\sqrt{2\xi_t\mu_t},\quad \forall \ell> k.
\end{equation}
Fix $\epsilon>0$. Since the sequence $\{\sqrt{\xik\mu_k}\}$ is summable, it follows
that there exists $N_1\in\integers_+$ such that
$\sum_{t=k}^{\infty}\sqrt{2\xi_t\mu_t}\leq\frac{\epsilon}{2}$ for all $k > N_1$. Moreover, since  the $\{y_k\}_{k \in \cS}$  converges to $\bar{y}$, it follows that there exists $N_2 \in\cS$ such that $N_2\geq N_1$ and $\norm{y_{N_2}-\bar{y}}_2\leq\frac{\epsilon}{2}$.
Hence, \eqref{eq:convergent-y} implies that
$\norm{y_\ell-\bar{y}}_2\leq\epsilon$ for all $\ell> N_2$. Therefore, $\lim_{k\in\integers_+}y_k=\bar{y}$.
\end{proof}
\section{Conclusion}
In this paper we build on previously known augmented Lagrangian
algorithms for convex
problems with standard inequality
constraints~\cite{Rockafellar-73,Roc73_1J} to develop the \alcc\ that
 solves convex problems with conic constraints.
 In each iteration of the \alcc, a sequence of ``penalty'' Lagrangians---see \eqref{eq:L_mu}---are
inexactly minimized  
over a ``simple'' closed convex set.
We show that 
recent results on optimal first-order
algorithms~\cite{Beck09_1J,Tseng08}~(see also \cite{Nesterov04,Nesterov05}), can be used to
 bound the number of basic operations needed in each iteration to
inexactly minimize the ``penalty'' Lagrangian sub-problem.
By carefully controlling the growth of the penalty parameter $\mu_k$ that
controls the iteration complexity of \alcc, and the decay of
parameter $\alpha_k$ that controls the suboptimality of each
sub-problem, 
we show that \alcc\ is a theoretically efficient
first-order, inexact augmented Lagrangian algorithm for structured
non-smooth conic convex programming.

\bibliographystyle{siam}
\bibliography{All}

\begin{thebibliography}{10}

\bibitem{Aybat11_1J}
{\sc N.~S. Aybat and G.~Iyengar}, {\em A first-order augmented {L}agrangian
  method for compressed sensing}, SIAM Journal on Optimization, 22 (2012),
  pp.~429--459.

\bibitem{Ser10_1J}
\leavevmode\vrule height 2pt depth -1.6pt width 23pt, {\em Unified approach for
  minimizing composite norms}, forthcoming in Mathematical Programming Journal,
  Series A,  (2012).

\bibitem{Beck09_1J}
{\sc A.~Beck and M.~Teboulle}, {\em A fast iterative shrinkage-thresholding
  algorithm for linear inverse problems}, SIAM Journal on Imaging Sciences, 2
  (2009), pp.~183--202.

\bibitem{Boyd04_1B}
{\sc S.~Boyd and L.~Vandenberghe}, {\em Convex optimization}, Cambridge
  University Press, 2004.

\bibitem{Hiriart-Urruty-Lemarechal-1993}
{\sc J.-B. Hiriart-Urruty and C.~Lemar\'echal}, {\em Convex Analysis and
  Minimization Algorithms II: Advanced Theory and Bundle Methods},
  Springer-Verlag, New York, 2001.

\bibitem{Lan11_1J}
{\sc G.~Lan, Z.~Lu, and R.~Monteiro}, {\em Primal-dual first-order methods with
  $\mathcal{O}(1/\epsilon)$ iteration complexity for cone programming},
  Mathematical Programming, Series A, 126 (2011), p.~1–29.

\bibitem{Lan12_2J}
{\sc G.~Lan and R.~Monteiro}, {\em Iteration-complexity of first-order
  augmented lagrangian methods for convex programming}, submitted to
  Mathematical Programming, Series A,  (2009).

\bibitem{Lan12_1J}
\leavevmode\vrule height 2pt depth -1.6pt width 23pt, {\em Iteration-complexity
  of first-order penalty methods for convex programming}, forthcoming,
  Mathematical Programming, Series A,  (2012).

\bibitem{Mar78_1J}
{\sc B.~Martinet}, {\em Perturbation des m{\' e}thodes d'optimisation.
  applications}, RAIRO, Analyse num{\' e}rique, 12 (1978), pp.~153--171.

\bibitem{Nesterov04}
{\sc Y.~Nesterov}, {\em \protect{Introductory Lectures on Convex Optimization:
  A Basic Course}}, \protect{Kluwer Academic Publishers}, 2004.

\bibitem{Nesterov05}
\leavevmode\vrule height 2pt depth -1.6pt width 23pt, {\em Smooth minimization
  of nonsmooth functions}, Mathematical Programming, Series A, 103 (2005),
  pp.~127--152.

\bibitem{Rockafellar-73}
{\sc R.~T. Rockafellar}, {\em A dual approach to solving nonlinear programming
  problems by unconstrained optimization}, Mathematical Programming, 5 (1973),
  pp.~354--373.

\bibitem{Roc73_1J}
\leavevmode\vrule height 2pt depth -1.6pt width 23pt, {\em The multiplier
  method of {H}estenes and {P}owell applied to convex programming}, Journal of
  Optimization and Applications, 12 (1973), pp.~555--562.

\bibitem{Rockafellar1976}
\leavevmode\vrule height 2pt depth -1.6pt width 23pt, {\em Augmented
  {L}agrangians and applications of the proximal point algorithm in convex
  programming}, Math. Oper. Res., 1 (1976), pp.~97--116.

\bibitem{Rockafellar-76}
\leavevmode\vrule height 2pt depth -1.6pt width 23pt, {\em Monotone operators
  and the proximal point algorithm}, SIAM Journal on Control and Optimization,
  14 (1976), pp.~877--898.

\bibitem{Rockafellar-book-70}
\leavevmode\vrule height 2pt depth -1.6pt width 23pt, {\em Convex Analysis},
  Princeton University Press, Princeton, 1997, c1970.

\bibitem{Tseng08}
{\sc P.~Tseng}, {\em On accelerated proximal gradient methods for
  convex-concave optimization}, submitted to SIAM Journal on Optimization,
  (2008).

\end{thebibliography}
\appendix
\section{Proofs of technical results}
\label{app:proofs}
\begin{lemma}
\label{lem:dist}
Let $f(\cdot)=\frac{1}{2}d^2_\cK(\cdot)$. Then $f$ is convex, and $\grad
f(y)=y-\Pi_\cK(y)$ 
is Lipschitz
continuous with Lipschitz constant equal to 1. Moreover, both
$\Pi_\cK(\cdot)$ and $\Pi^c_\chi(z) = z -\Pi_\cK(z)$ are
nonexpansive. 
\end{lemma}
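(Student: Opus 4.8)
The plan is to recognize $f(\cdot)=\tfrac12 d_\cK^2(\cdot)$ as a Moreau regularization and then simply invoke Lemma~\ref{lem:prox}. Concretely, I would take $g=\mathbf{1}_\cK$, the indicator function of $\cK$, and $\mu=1$. Since $\cK$ is a nonempty, closed, convex cone, $\mathbf{1}_\cK$ is a proper, closed, convex function, so it meets the hypotheses of Lemma~\ref{lem:prox}. Unwinding the definitions in \eqref{eq:dist_func} and \eqref{eq:Kproj}, I would note that
\begin{equation*}
\psi_1(y)=\min_{z\in\reals^m}\Big\{\mathbf{1}_\cK(z)+\tfrac12\norm{z-y}_2^2\Big\}
=\min_{z\in\cK}\tfrac12\norm{z-y}_2^2=\tfrac12 d_\cK(y)^2=f(y),
\end{equation*}
and correspondingly that the proximal map is exactly the Euclidean projection, $\pi_1(y)=\Pi_\cK(y)$, with $\pi^c_1(y)=y-\pi_1(y)=y-\Pi_\cK(y)$.

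With this identification in place, the first three claims follow directly. Lemma~\ref{lem:prox} (and in particular \eqref{eq:moreau-grad}) asserts that $\psi_1=f$ is everywhere finite, differentiable, and convex, with $\grad f(y)=\grad\psi_1(y)=\tfrac{1}{1}\big(y-\pi_1(y)\big)=y-\Pi_\cK(y)$, and that this gradient is Lipschitz continuous with constant $\tfrac1\mu=1$. For the nonexpansiveness claims I would specialize the key inequality \eqref{eq:prox} to $\mu=1$, which gives
\begin{equation*}
\norm{\Pi_\cK(y_1)-\Pi_\cK(y_2)}_2^2+\norm{\Pi^c_\cK(y_1)-\Pi^c_\cK(y_2)}_2^2\leq\norm{y_1-y_2}_2^2.
\end{equation*}
Since each summand on the left is nonnegative, each is bounded above by $\norm{y_1-y_2}_2^2$ separately, so both $\Pi_\cK$ and $\Pi^c_\cK$ are nonexpansive.

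There is essentially no hard step here: the result is a direct corollary of Lemma~\ref{lem:prox} once $f$ is cast as the Moreau envelope of $\mathbf{1}_\cK$. The only points requiring any care are verifying that $\mathbf{1}_\cK$ is proper, closed, and convex (immediate from $\cK$ being nonempty, closed, and convex) so that Lemma~\ref{lem:prox} applies, and correctly matching the generic objects $\pi_\mu,\pi^c_\mu$ of that lemma to the projection $\Pi_\cK$ and its complement at the parameter value $\mu=1$. I expect the bookkeeping of these identifications to be the main, albeit minor, obstacle.
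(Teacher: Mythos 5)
Your proof is correct and follows exactly the paper's own route: identifying $f=\frac{1}{2}d_\cK^2$ as the Moreau regularization of the indicator function $\mathbf{1}_\cK$ (with $\mu=1$) and $\Pi_\cK$ as the associated proximal map, then invoking Lemma~\ref{lem:prox}. You in fact supply slightly more detail than the paper does, e.g.\ explicitly deriving the two nonexpansiveness claims from \eqref{eq:prox}.
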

\begin{proof}
The indicator function $\ones_{\cK}(\cdot)$  of a closed convex set
$\cK$ is a proper closed convex function, and
\eq
f(y)=
\min_{z\in\reals^m}\{\ones_{\cK}(\cdot)(z)+\frac{1}{2}\norm{z-y}_2^2\}=
\min_{z\in\cK}\frac{1}{2}\norm{z-y}_2^2,
\en
is the Moreau regularization of the function  $\ones_{\cK}(\cdot)$,
and the projection operator $\Pi_\cK(\cdot)$ is the corresponding
Moreau proximal map. Therefore, all the results of
this lemma follow from Lemma~\ref{lem:prox}.
\end{proof}
\newpage
\begin{lemma}
\label{lem:infeasibility}
For all $y,y'\in\reals^m$, $d_\cK(y)\leq d_\cK(y+y')+\norm{y'}_2$.
\end{lemma}
\begin{proof}
\begin{eqnarray*}
d_\cK(y)&= &\norm{\Pi_\cK(y)-y}_2=\norm{\Pi_\cK(y)-y
  +\Pi_\cK(y+y')-\Pi_\cK(y+y')+y'-y'+y-y}_2,\\
&\leq &\norm{\Pi_\cK(y+y')-(y+y')}_2
+\norm{\Pi^c_\cK(y+y')-\Pi^c_\cK(y)}_2,\\
&\leq &d_\cK(y+y')+\norm{y'}_2,
\end{eqnarray*}
where the last inequality follows from the fact that
$\Pi^c_\chi(x) = x -\Pi_\cK(x)$ is nonexpansive.
\end{proof}
\subsection*{Proof of Lemma~\ref{lem:lipschitz}}
$\mbox{}$
\begin{proof}
For all $y\in\reals^m$, the convexity of $f_k(x,y)$ in $x$ follows from Lemma~\ref{lem:dist}.

Moreover, Lemma~\ref{lem:dist} and the chain rule,  together imply
\eqref{eq:gradfk}. Now, fix $x', x''\in\reals^n$ and
$\bar{y}\in\reals^m$. Then
\eqref{eq:gradfk} implies that
\begin{align*}
\lefteqn{\norm{\grad_x f_k(x',\bar{y})-\grad_x f_k(x'',\bar{y})}_2}\\
&= \left\|A^T\left[Ax'-b-\frac{\bar{y}}{\mu_k}-\Pi_\cK\left(Ax'-b-\frac{\bar{y}}{\mu_k}\right)-
    \left(Ax''-b-\frac{\bar{y}}{\mu_k}-\Pi_\cK\left(Ax''-b-\frac{\bar{y}}{\mu_k}\right)\right)\right]\right\|_2,\\
&\leq \sigma_{\max}(A)\norm{A(x'-x'')}_2 \leq  \sigma^2_{\max}(A)~\norm{x'-x''}_2,
\end{align*}
where the first inequality follows from the non-expansiveness of $\Pi^c_{\chi}(\cdot)$.
\end{proof}
\subsection*{Proof of Lemma~\ref{lem:cone}}
$\mbox{}$
\begin{proof}
$\Pi_\cK(x)\in\argmin_{s\in\cK}\norm{s-x}_2^2$, if, and only if,
$\fprod{\Pi_\cK(x)-x,~s-\Pi_\cK(x)}\geq 0$ for all $s\in\cK$. Hence,
\begin{equation}
\label{eq:ap1}
\fprod{\Pi_\cK(x)-x,~s}\geq \fprod{\Pi_\cK(x)-x,~\Pi_\cK(x)}, \quad \quad \forall s\in\cK.
\end{equation}
Since the left hand side of \eqref{eq:ap1} is bounded from below for
all $s\in\cK$, it follows that $\Pi_\cK(x)-x\in\cK^*$. Moreover, since
$\Pi_\cK(x)\in\cK$, we have
\begin{equation*}
0=\min_{s\in\cK}\fprod{\Pi_\cK(x)-x,~s}\geq\fprod{\Pi_\cK(x)-x,~\Pi_\cK(x)}\geq 0.
\end{equation*}
This implies $\fprod{\Pi_\cK(x)-x,~\Pi_\cK(x)}=0$.

Suppose $x\in-\cK$.  
Clearly, $\fprod{\bo -x,~s - \bo}\geq 0$ for all $s\in\cK$. Thus, it
follows that $\Pi_{\cK}(x) = \bo$.
\end{proof}
\end{document}